\numberwithin{equation}{section}
                        \theoremstyle{plain}
\newtheorem{theorem}{Theorem}[section]
\newtheorem{thm}{Theorem}
\newtheorem{lemma}[theorem]{Lemma}
\newtheorem{proposition}[theorem]{Proposition}
\newtheorem{conjecture}{Conjecture}
\theoremstyle{definition}
\def\BC{\mathbb C}
\def\BZ{\mathbb Z}
\def\CA{\mathcal A}
\def\CR{\mathcal R}
\def\CT{\mathcal T}
\def\fp{\mathfrak p}
\def\ft{\mathfrak t}
\def\be { \begin{equation} }
\def\ee { \end{equation} }
\begin{document}

\title[AJ conjecture for torus knots]{Proof of a stronger version of the AJ conjecture for torus knots}

\author[Anh T. Tran]{Anh T. Tran}
\address{School of Mathematics, 686 Cherry Street,
 Georgia Tech, Atlanta, GA 30332, USA}
\email{tran@math.gatech.edu}

\thanks{2010 {\em Mathematics Classification:} Primary 57N10. Secondary 57M25.\\
{\em Key words and phrases: colored Jones polynomial, A-polynomial, AJ conjecture.}}

\begin{abstract}
For a knot $K$ in $S^3$, the $sl_2$-colored Jones function $J_K(n)$ is a sequence of Laurent polynomials in the variable $t$, which is known to satisfy non-trivial linear recurrence relations. The operator corresponding to the minimal linear recurrence relation is called the recurrence polynomial of $K$. The AJ conjecture \cite{Ga04} states that when reducing $t=-1$, the recurrence polynomial is essentially equal to the $A$-polynomial of $K$. In this paper we consider a stronger version of the AJ conjecture, proposed by Sikora \cite{Si}, and confirm it for all torus knots.
\end{abstract}

\maketitle

\setcounter{section}{-1}

\section{Introduction}

\subsection{The AJ conjecture}

For a knot $K$ in $S^3$, let $J_K(n) \in \mathbb Z[t^{\pm 1}]$ be the colored Jones polynomial of $K$ colored by the $n$-dimensional simple $sl_2$-representation \cite{Jo, RT}, normalized so that
for the unknot $U$,
$$J_U(n) = [n] := \frac{t^{2n}- t^{-2n}}{t^{2} -t^{-2}}.$$
The color $n$ can be assumed to take negative integer values by setting $J_K(-n) = - J_K(n)$. In particular, $J_K(0)=0$. It is known that $J_K(1)=1$, and $J_K(2)$ is the ordinary Jones polynomial.

Define two operators $L,M$ acting on the set of discrete functions $f: \mathbb Z \to \mathbb \CR:=\BC[t^{\pm 1}]$ by
$$(Lf)(n) := f(n+1), \qquad (Mf )(n) := t^{2n} f(n).$$
It is easy to see that $LM= t^2 ML$. Besides, the inverse operators $L^{-1}, M^{-1}$ are well-defined. One can consider $L,M$ as elements of the quantum torus
$$ \mathcal T := \mathbb \CR\langle L^{\pm1}, M^{\pm 1} \rangle/ (LM - t^2ML),$$
which is not commutative, but almost commutative.

Let $$\mathcal A_K = \{ P \in \mathcal T \mid  P J_K=0\},$$ which is a left-ideal of $\mathcal T$, called the {\em recurrence ideal} of $K$. It was proved in \cite{GL} that for every knot $K$, the
recurrence ideal $\mathcal A_K$ is non-zero. An element in $\mathcal A_K$ is called a recurrence relation for the colored Jones polynomials of $K$.

The ring $\mathcal T$ is not a principal left-ideal domain, i.e. not every left-ideal of $\mathcal T$ is generated by one element. By adding all  inverses of polynomials in $t,M$ to
$\mathcal T$ one gets a principal left-ideal domain $\widetilde\CT$, c.f. \cite{Ga04}. The ring $\widetilde{\CT}$ can be formally defined as follows. Let
$\CR(M)$ be the fractional field of the polynomial ring $\CR[M]$.
Let $\widetilde \CT$ be the set of all Laurent polynomials in the
variable $L$ with coefficients in $\CR(M)$:
$$\widetilde
\CT =\{\sum_{j\in \BZ}f_j(M) L^j \,\, | \quad f_j(M)\in \CR(M),
\,\,\, f_j=0  \quad \text{almost everywhere} \},
$$
and define the product in  $\widetilde \CT$ by $f(M) L^{k} \cdot g(M)
L^{l}=f(M)\, g(t^{2k}M) L^{k+l}.$

The left-ideal extension $\widetilde \CA_K :=\widetilde \CT \CA_K$ of $\CA_K$ in
$\widetilde \CT$ is then generated  by a polynomial
$$\alpha_K(t;M,L) = \sum_{j=0}^{d} \alpha_{K,j}(t,M) \, L^j,$$
where $d$ is assumed to be minimal and all the
coefficients $\alpha_{K,j}(t,M)\in \BZ[t^{\pm1},M]$ are assumed to
be co-prime. That $\alpha_K$ can be chosen to have integer
coefficients follows from the fact that $J_K(n) \in
\BZ[t^{\pm1}]$. The polynomial $\alpha_K$ is defined up to a polynomial in $\mathbb Z[t^{\pm 1},M]$. Moreover, one can choose $\alpha_K \in \mathcal A_K$, i.e. it is a recurrence relation for 
the colored Jones polynomials. We will call $\alpha_K$ the {\em recurrence polynomial} of $K$. 

\medskip

Let $\varepsilon$ be the map reducing $t=-1.$ Garoufalidis \cite{Ga04} formulated the following conjecture (see also \cite{FGL, Ge}).

\begin{conjecture}{\bf (AJ conjecture)} For every knot $K$, $\varepsilon(\alpha_K)$ is equal to the $A$-polynomial, up to a polynomial depending on $M$ only.
\label{c1}
\end{conjecture}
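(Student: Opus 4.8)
The plan is to prove the statement for an arbitrary knot $K$ by routing both sides through the Kauffman bracket skein module of the complement $X:=S^3\setminus N(K)$, following the noncommutative framework of \cite{FGL}. On the quantum side, $J_K$ is produced by the action of the skein algebra $\CS(\BT)$ of the boundary torus $\BT=\partial N(K)$ on $\CS(X)$: the $n$-th colored Jones value is the image of the meridian curve cabled by the $n$-th Jones--Wenzl idempotent, and under the standard presentation of $\CS(\BT)$ the meridian and longitude become $M$ and $L$ with $LM=t^2ML$, identifying $\CS(\BT)$ with a subalgebra of $\CT$. The \emph{quantum peripheral ideal} $\CP_K$, the kernel of $x\mapsto x\cdot\varnothing$ from $\CS(\BT)$ to $\CS(X)$, then consists of recurrence relations for $J_K$, so $\CP_K\subseteq\CA_K$. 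On the classical side, the map $\ve$ is a ring homomorphism on $\widetilde\CT$ because its product $f(M)L^k\cdot g(M)L^l=f(M)g(t^{2k}M)L^{k+l}$ becomes commutative once $t^{2k}=1$; moreover $\ve$ sends $\CS(\BT)$ onto the coordinate ring $\BC[X(\BT)]$ of the $SL_2(\BC)$-character variety of the torus, and the one-dimensional part of the image of the restriction $X(X)\to X(\BT)$ is exactly the curve $A_K(M,L)=0$.

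The argument then splits into two divisibilities. For the first, extend $\CP_K\subseteq\CA_K$ to $\widetilde\CT$; since $\widetilde\CT$ is a principal left-ideal domain, $\widetilde\CT\CP_K=\widetilde\CT\pi_K$ and $\widetilde\CA_K=\widetilde\CT\al_K$ with $\al_K$ a right factor of $\pi_K$. Applying the ring homomorphism $\ve$ gives $\ve(\al_K)\mid\ve(\pi_K)$ in the commutative ring $\BC(M)[L^{\pm1}]$. One must then show that $\ve(\pi_K)$ generates the classical peripheral ideal of functions vanishing on the restriction image, whose non-$M$ factor is $A_K$; this yields $\ve(\al_K)\mid A_K$ up to a polynomial in $M$, so the reduced recurrence detects \emph{at most} the $A$-polynomial. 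For the second, reverse divisibility, one uses the minimality of the $L$-degree $d$ of $\al_K$: each one-dimensional component of the restriction image must force an independent recurrence, so that $\ve(\al_K)$ cannot be a proper factor of $A_K$. Together these give $\ve(\al_K)\doteq A_K$ up to $M$-factors, which is the conjecture.

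The main obstacle is the reverse divisibility together with the flatness of the $t=-1$ specialization, and it is exactly here that a uniform proof across all knots breaks down. Controlling $\ve(\al_K)$ requires that reduction modulo $(t+1)$ neither collapse the $L$-degree nor kill a factor of $A_K$: the leading coefficient $\al_{K,d}(t,M)$ must be nonzero at $t=-1$, and no geometric component of $A_K$ may be lost. But $\CS(X)$ can carry $t$-torsion and need not be free over $\CR$ for a general $K$, so the specialization is not visibly flat, and a factor of the recurrence could disappear or the abelian component contributing $(L-1)$ could be missed; symmetrically, identifying $\ve(\pi_K)$ with the full classical peripheral ideal presupposes that $X(X)$ has only the expected components and dimensions. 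Thus the skein-module framework cleanly reduces the conjecture to two structural inputs---finite generation and identified torsion of $\CS(X)$, and control of the components of $X(X)$---neither of which is known in the generality of all knots. Establishing these uniform structural results is the genuine content that keeps the AJ conjecture open, and any complete proof of the statement as worded must supply them.
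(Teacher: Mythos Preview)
The statement you are attempting to prove is Conjecture~\ref{c1}, and it is stated in the paper precisely as a \emph{conjecture}: the paper does not prove it for all knots. What the paper actually does (in Section~\ref{AJ conj}) is verify the conjecture for the special family of torus knots $T(a,b)$ by a completely explicit route: it writes down a closed formula for $J_{T(a,b)}(n)$, derives an explicit recurrence (Lemmas~\ref{lemab}--\ref{alpha_2b}), proves by a degree argument that this recurrence is minimal (Propositions~\ref{prop} and its $a=2$ analogue), and then checks by hand that $\varepsilon(\alpha_{T(a,b)})$ equals a monomial in $M$ times the known $A$-polynomial. No skein-module machinery is invoked.

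Your proposal, by contrast, aims at the full statement ``for every knot $K$'' and is not a proof but a diagnosis. You correctly set up the Frohman--Gelca--Lofaro framework and isolate the two structural inputs that would be needed---flatness of the $t\to -1$ specialization (equivalently, control of torsion and freeness of $\mathcal S(X)$ over $\mathcal R$) and control of the components of the character variety---and you then explicitly concede that neither is available in general. That concession is accurate: these are exactly the known obstructions, and they are why the AJ conjecture remains open. So the gap is not a subtle slip in your argument; it is that the argument, by your own account, does not close. There is nothing here to compare to a ``paper's proof,'' because the paper makes no such claim for arbitrary $K$; if your goal is to match what the paper establishes, you should restrict to torus knots and supply the concrete computation, not the general skein outline.
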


The $A$-polynomial of a knot was introduced by Cooper et al. \cite{CCGLS}; it describes the $SL_2(\BC)$-character variety of the knot complement as viewed from the boundary torus. Here in the definition of the $A$-polynomial, we also allow the factor $L-1$ coming from the abelian component of the character variety of the knot group. Hence the $A$-polynomial in this paper is equal to $L-1$ times the $A$-polynomial defined in \cite{CCGLS}.

\medskip

The AJ Conjecture was verified for the trefoil and figure 8 knots by Garoufalidis \cite{Ga04}, and was partially checked for all torus knots by Hikami \cite{Hi}. It was established for some classes of two-bridge knots and pretzel knots, including all twist knots and $(-2,3,6n \pm 1)$-pretzel knots, by Le and the author \cite{Le06, LT}. Here we provide a full proof of the AJ conjecture for all torus knots. Moreover, we show that a stronger version of the conjecture, due to Sikora, holds true for all torus knots.

\subsection{Main results} 

For a finitely generated group $G$, let $\chi(G)$ denote the $SL_2(\BC)$-character variety of $G$, see e.g. \cite{CS, LM}. For a manifold $Y$ we use $\chi(Y)$ also to denote $\chi(\pi_1(Y))$. Suppose  $G=\BZ^2$, the free abelian group with 2 generators.
Every pair  of generators $\mu, \lambda$ will define an isomorphism
between $\chi(G)$ and $(\BC^*)^2/\tau$, where $(\BC^*)^2$ is the
set of non-zero complex pairs $(M,L)$ and $\tau$ is the involution
$\tau(M,L):=(M^{-1},L^{-1})$, as follows: Every representation is
conjugate to an upper diagonal one, with $M$ and $L$ being the
upper left entries of $\mu$ and $\lambda$ respectively. The
isomorphism does not change if one replaces $(\mu, \lambda)$ by
$(\mu^{-1},\lambda^{-1})$.

For an algebraic set $V$ (over $\BC$), let $\BC[V]$ denote the ring
of regular functions on $V$.  For example, $\BC[(\BC^*)^2/\tau]=
\ft^\sigma$, the $\sigma$-invariant subspace of  $\ft:=\BC[M^{\pm
1},L^{\pm 1}]$, where $\sigma(M^kL^l):= M^{-k}L^{-l}.$

\smallskip

Let $K$ be a knot in $S^3$ and $X=S^3 \setminus K$ its complement. 
The boundary of
$X$ is a torus whose fundamental group  is free abelian of rank
two. An orientation of $K$ will define a unique pair of an
oriented meridian $\mu$ and an oriented longitude $\lambda$ such that the linking
number between the longitude and the knot is zero. The pair provides
an identification of $\chi(\partial X)$ and $(\BC^*)^2/\tau$
which actually does not depend on the orientation of $K$.

The inclusion $\partial X \hookrightarrow X$ induces an algebra homomorphism
$$\theta: \BC[\chi(\partial X)]  \equiv \ft^\sigma \longrightarrow
\BC[\chi(X)].$$
We will call the kernel $\fp$ of $\theta$ the {\em $A$-ideal} of $K$; it is an ideal of $\ft^\sigma$. The $A$-ideal was first introduced in \cite{FGL}; it determines the $A$-polynomial of $K$. In fact $\fp=(A_K \cdot \ft)^{\sigma}$, the $\sigma$-invariant part of the ideal $A_K \cdot \ft \subset \ft$ generated by the $A$-polynomial $A_K$.

\smallskip

The involution $\sigma$ acts on the quantum torus $\CT$ also by $\sigma(M^kL^l)= M^{-k}L^{-l}$. Let $\CA_K^\sigma$ be the $\sigma$-invariant part of the recurrence ideal $\CA_K$; it is an ideal of $\CT^{\sigma}$. Sikora \cite{Si} proposed the following conjecture.

\begin{conjecture} 
Suppose $K$ is a knot. Then $\sqrt{\varepsilon(\CA_K^{\sigma})}=\fp$. 
\label{c2}
\end{conjecture}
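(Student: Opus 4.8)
The plan is to establish the two inclusions of radical ideals inside the Noetherian ring $\ft^\sigma=\BC[\chi(\partial X)]$. Since $\fp=(A_K\cdot\ft)^\sigma$ is radical once $A_K$ is taken squarefree, and $\sqrt{\varepsilon(\CA_K^\sigma)}$ is radical by construction, the Hilbert Nullstellensatz reduces the claimed identity $\sqrt{\varepsilon(\CA_K^\sigma)}=\fp$ to the equality of zero loci $V(\varepsilon(\CA_K^\sigma))=V(\fp)$ inside $(\BC^*)^2/\tau$, where $V(\fp)$ is precisely the $A$-polynomial curve. Note first that at $t=-1$ the quantum torus $\CT$ becomes commutative, since $t^2=1$, and specializes to $\ft$, with $\sigma$ restricting to the same involution on $\ft$; hence $\varepsilon(\CA_K^\sigma)$ is a genuine ideal of $\ft^\sigma$ and the reduction is legitimate.

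For the inclusion $\sqrt{\varepsilon(\CA_K^\sigma)}\subseteq\fp$ I would show directly that $\varepsilon(\CA_K^\sigma)\subseteq\fp$. Given $P\in\CA_K^\sigma$, one must verify that $\varepsilon(P)\in\ft^\sigma$ lies in the kernel of $\theta$, i.e. that $\varepsilon(P)$ vanishes at the image of every character of $X$. This is the classical-limit direction, and I would derive it from the Kauffman bracket skein picture: at $t=-1$ the skein algebra of the boundary torus is identified with $\ft^\sigma$, the skein module of $X$ maps onto $\BC[\chi(X)]$, and an operator annihilating the colored Jones function becomes, after setting $t=-1$, an element of the peripheral ideal, hence of $\fp$. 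Because $\fp$ is radical, taking radicals preserves the inclusion. This half is essentially the content already known for the original AJ conjecture (divisibility of $\varepsilon(\alpha_K)$ by the $A$-polynomial), adapted here to the $\sigma$-invariant ideal.

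The reverse inclusion $\fp\subseteq\sqrt{\varepsilon(\CA_K^\sigma)}$ is equivalent to $V(\varepsilon(\CA_K^\sigma))\subseteq V(\fp)$: no common zero of the classical $\sigma$-invariant recurrences may lie off the $A$-polynomial curve. Here the passage to the full ideal $\CA_K^\sigma$, rather than to a single generator, is exactly what one exploits. Whereas $\varepsilon(\alpha_K)$ typically carries an extraneous factor in $M$ alone, which is the source of the ``up to a polynomial in $M$'' ambiguity in Conjecture \ref{c1}, the common zero set of the whole ideal should discard these spurious $M$-lines, since different elements of $\CA_K^\sigma$ carry different such factors. My strategy would be to produce, from $\alpha_K$, its $\sigma$-conjugate, and lower-order combinations, enough $\sigma$-invariant relations to force $V(\varepsilon(\CA_K^\sigma))$ to be purely one-dimensional and to account for every branch of the $A$-curve.

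The main obstacle is the absence of general control over $\CA_K^\sigma$ beyond the single generator $\alpha_K$. To execute the reverse inclusion one must know that the ideal genuinely contains the further relations needed both to annihilate the spurious $M$-factors and to cover every component of $\chi(X)$ as seen from the boundary; in particular one must rule out that some branch of the $A$-curve fails to appear in $V(\varepsilon(\CA_K^\sigma))$. Establishing this in general appears to require either an explicit determination of $\alpha_K$ together with sufficiently many additional elements of $\CA_K^\sigma$, or a general theorem bounding the minimal recurrence order $d$ from below in terms of the boundary geometry of $\chi(X)$. No such general mechanism is presently available, which is why the statement is hard for an arbitrary knot; in every case where it has been confirmed, the obstruction is overcome by computing $\alpha_K$ and the $A$-polynomial outright and matching their zero sets directly.
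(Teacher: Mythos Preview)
Your proposal is a strategic outline rather than a proof, and you say so yourself in the final paragraph. The paper does not prove Conjecture~\ref{c2} in general either; it proves it only for torus knots (Theorem~\ref{th2}), so the relevant comparison is with that argument.

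For the inclusion $\sqrt{\varepsilon(\CA_K^\sigma)}\subset\fp$, the paper does not appeal to skein theory. Instead it uses Lemma~\ref{00}: every $\delta\in\CA_K$ is left-divisible by $\alpha_K$ in the Ore localization $\widetilde\CT$, with a denominator $g(t,M)$ that may be taken nonvanishing at $t=-1$. Combined with the explicit identification $\varepsilon(\alpha_{T(a,b)})=f(M)\,A_{T(a,b)}$ established in Section~\ref{AJ conj}, this forces $\varepsilon(\CA_K)\subset A_K\cdot\ft$ and hence $\varepsilon(\CA_K^\sigma)\subset\fp$. Your skein-module sketch, if it worked, would be more conceptual and more general; but the assertion that an arbitrary annihilator of $J_K$ lands in the peripheral ideal at $t=-1$ is not known in general. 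The established inclusion runs the other way (the peripheral/orthogonal ideal sits inside $\CA_K$), so your argument for this half is not justified as stated.

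For the reverse inclusion $\fp\subset\sqrt{\varepsilon(\CA_K^\sigma)}$, which you correctly flag as the crux, the paper supplies exactly the ``further relations'' you say you would need but cannot produce. It constructs by hand $\sigma$-invariant annihilators $PQ\in\CA_{T(a,b)}^\sigma$ for $a,b>2$ (Proposition~\ref{PQ}) and $R\in\CA_{T(2,b)}^\sigma$ (Proposition~\ref{R}) whose specializations at $t=-1$ are, up to units in $\ft$, \emph{perfect powers} of the $A$-polynomial with no extraneous $M$-factor:
\[
\varepsilon(PQ)=L^{-2}\bigl(L^{-1}M^{-ab}A_{T(a,b)}\bigr)^4,\qquad \varepsilon(R)=\bigl(L^{-1}M^{-b}A_{T(2,b)}\bigr)^2.
\]
A short $\sigma$-symmetry check then shows that for any $u\in\fp$ one has $u^4\in\varepsilon(\CA_K^\sigma)$ (respectively $u^2$), whence $u\in\sqrt{\varepsilon(\CA_K^\sigma)}$. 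The explicit construction of $P$, $Q$, $R$ and the verification that they annihilate $J_{T(a,b)}$, carried out directly from Morton's formula, is the substantive content your outline lacks.
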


Here $\sqrt{\varepsilon(\CA_K^{\sigma})}$ denotes the radical of the ideal $\varepsilon(\CA_K^{\sigma})$ in the ring $\ft^{\sigma}=\varepsilon(\CT^{\sigma})$. 

\smallskip

It is easy to see that Conjecture \ref{c2} implies the AJ conjecture. Conjecture \ref{c2} was verified for the unknot and the trefoil knot by Sikora \cite{Si}. In the present paper we confirm it for all torus knots.

\begin{thm}
Conjecture \ref{c2} holds true for all torus knots.
\label{th2}
\end{thm}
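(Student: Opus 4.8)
The plan is to recast Sikora's conjecture as an equality of affine varieties and then verify it by explicit computation, exploiting that for a torus knot $T(p,q)$ both the colored Jones function and the $A$-polynomial are known in closed form. The $A$-polynomial of $T(p,q)$ is classically $A_{T(p,q)}=(L-1)(LM^{2pq}+1)$, which is squarefree, so the $A$-ideal $\fp=(A_{T(p,q)}\cdot\ft)^{\sigma}$ is radical. Since $\ft^{\sigma}=\BC[(\BC^*)^2/\tau]$ is a finitely generated $\BC$-algebra, the Nullstellensatz turns the assertion $\sqrt{\varepsilon(\CA_{T(p,q)}^{\sigma})}=\fp$ into the equality of zero sets $V(\varepsilon(\CA_{T(p,q)}^{\sigma}))=V(\fp)$ in $(\BC^*)^2/\tau$. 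This variety statement is what I would prove, and in it the radical and all multiplicities become irrelevant.

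First I would record Morton's formula (the Rosso--Jones formula), which writes $(t^{2n}-t^{-2n})\,J_{T(p,q)}(n)$ as a finite sum of Gaussian terms $t^{pq\,n^2+\beta_i n+\gamma_i}$ over a range depending on $n$. Each such term is annihilated by a first--order operator $L-c\,M^{e}$ with $c$ a power of $t$ and $e$ determined by $p,q$, and tracking the finitely many terms that enter or leave the summation range as $n$ varies yields an explicit recurrence of low order, hence explicit elements of $\CA_{T(p,q)}$. Using that $\CA_{T(p,q)}$ is $\sigma$-stable --- because $J_{T(p,q)}(-n)=-J_{T(p,q)}(n)$ corresponds to the substitution $(M,L)\mapsto(M^{-1},L^{-1})$ --- I would symmetrize these into generators of the $\sigma$-invariant ideal $\CA_{T(p,q)}^{\sigma}$ of $\CT^{\sigma}$.

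With explicit generators in hand I would establish the two inclusions of zero sets. For $V(\fp)\subseteq V(\varepsilon(\CA_{T(p,q)}^{\sigma}))$, equivalent to $\varepsilon(\CA_{T(p,q)}^{\sigma})\subseteq\fp$, it suffices to check that each reduced generator is divisible by $A_{T(p,q)}$ in $\ft$: at $t=-1$ the recurrence operator factors, up to a power of $M$, into the abelian piece $L-1$ and the non-abelian piece $LM^{2pq}+1$, so every reduced $\sigma$-invariant recurrence lands in $\fp$. The substantive inclusion is the reverse, $V(\varepsilon(\CA_{T(p,q)}^{\sigma}))\subseteq V(\fp)$: here I must show that the common zero locus of all reduced $\sigma$-invariant recurrences contains no point off the $A$-curve. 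Because $\varepsilon(\alpha_{T(p,q)})$ equals $A_{T(p,q)}$ only up to a factor in $\BC[M]$ --- the ordinary AJ statement for torus knots, which the same computation establishes --- its zero set carries extra lines $M=\mathrm{const}$; I would excise these by exhibiting further elements of $\CA_{T(p,q)}^{\sigma}$ whose reductions do not vanish on them, so that the full reduced ideal cuts the zero set down to exactly the $\tau$-image of $\{A_{T(p,q)}=0\}$, namely $V(\fp)$.

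I expect the main obstacle to be the control of the entire $\sigma$-invariant ideal $\CA_{T(p,q)}^{\sigma}$, rather than of the single recurrence polynomial $\alpha_{T(p,q)}$. Because $\CT$ is not a principal left-ideal domain, $\CA_{T(p,q)}$ is in general strictly larger than $\CT\,\alpha_{T(p,q)}$, and it is exactly the recurrences living in this gap that must be produced from Morton's formula in order to remove the spurious $\BC[M]$-factor carried by $\varepsilon(\alpha_{T(p,q)})$. Pinning these down explicitly, and separating cleanly the abelian component $L=1$ from the non-abelian component $LM^{2pq}+1=0$ after passing to $\sigma$-invariants, is the delicate heart of the argument and the precise place where the strengthening over the classical AJ conjecture is won.
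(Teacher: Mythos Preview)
Your plan for the inclusion $\fp\subseteq\sqrt{\varepsilon(\CA_K^{\sigma})}$ is sound and close in spirit to the paper's, though the paper is more direct: instead of accumulating extra recurrences to excise the spurious $M$-lines carried by $\varepsilon(\alpha_K)$, it exhibits a single $\sigma$-invariant annihilator whose reduction at $t=-1$ is, up to a unit of $\ft$, a pure power of $A_K$ with no $M$-only factor (an operator $R$ when $p=2$, a product $PQ$ when $p,q>2$), and then shows directly that every $u\in\fp$ has $u^2$, respectively $u^4$, in $\varepsilon(\CA_K^{\sigma})$. Note also a factual slip: $A_{T(p,q)}=(L-1)(LM^{2pq}+1)$ holds only for $p=2$; for $p,q>2$ the $A$-polynomial is $(L-1)(L^2M^{2pq}-1)$, and the two cases must be handled separately.

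The genuine gap is in the other inclusion, $\varepsilon(\CA_K^{\sigma})\subseteq\fp$. You say it suffices to check that ``each reduced generator is divisible by $A_{T(p,q)}$'' and then observe that $\varepsilon(\alpha_K)$ factors as $f(M)A_K$; but the elements you build from Morton's formula are only known to \emph{lie in} $\CA_K^{\sigma}$, not to generate it, and you yourself stress later that $\CA_K$ is strictly larger than $\CT\,\alpha_K$. The passage from ``$\varepsilon(\alpha_K)\in A_K\cdot\ft$'' to ``every reduced $\sigma$-invariant recurrence lands in $\fp$'' is a non-sequitur as written. The paper supplies the missing step by passing to the localization $\widetilde\CT$: since $\alpha_K$ generates $\widetilde\CA_K$ in the principal left-ideal domain $\widetilde\CT$, every $\delta\in\CA_K$ factors as $\delta=g(t,M)^{-1}\,\gamma(t,M,L)\,\alpha_K$ with $g\in\BC[t^{\pm1},M]$, $\gamma\in\CT$, and one may arrange $\varepsilon(g)\neq0$. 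Reducing at $t=-1$ and using that $A_K$ has no nontrivial factor in $\BC[M^{\pm1}]$ then forces $\varepsilon(\delta)\in A_K\cdot\ft$, whence $\varepsilon(\CA_K^{\sigma})\subseteq(A_K\cdot\ft)^{\sigma}=\fp$. Without this localization argument, or an equivalent replacement, your ``easy'' direction is unproven.
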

\label{intro}

\subsection{Acknowledgements} The author would like to thank T.T.Q. Le for his guidance, S. Garoufalidis for helpful discussions, and the referee for suggestions.

\subsection{Plan of the paper} We provide a full proof of the AJ conjecture for all torus knots in Section \ref{AJ conj} and prove Theorem \ref{th2} in Section 2.

\section{Proof of the AJ conjecture for torus knots}

\label{AJ conj}

We will always assume that knots have framings 0.

\smallskip

Let $T(a,b)$ denote the $(a,b)$-torus knot. We consider the two cases: $a,b>2$ and $a=2$ separately. Lemmas \ref{lemab} and \ref{lem2b} below were first proved in \cite{Hi} using formulas for the colored Jones polynomials and the Alexander polynomial of torus knots given in \cite{Mo}. We present here direct proofs.

\subsection{The case $a,b >2$} 

\begin{lemma}
One has
\begin{eqnarray*}
 J_{T(a,b)}(n+2) = t^{-4ab(n+1)} \, J_{T(a,b)}(n)+t^{-2ab(n+1)} \, \frac{t^{2}\lambda_{(a+b)(n+1)}-t^{-2} \lambda_{(a-b)(n+1)}}{t^{2}-t^{-2}},
\end{eqnarray*}
where $\lambda_k:=t^{2k}+t^{-2k}.$
\label{lemab}
\end{lemma}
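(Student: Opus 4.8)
The plan is to start from the Rosso--Jones cabling formula for the colored Jones polynomial of a torus knot and then verify the recurrence by direct substitution, as opposed to the more indirect route of \cite{Hi}. Realizing $T(a,b)$ as the $(a,b)$-cable of the unknot and correcting the framing to zero, one obtains a closed expression of the shape
\[
J_{T(a,b)}(n) = \frac{t^{-ab(n^2-1)}}{t^2-t^{-2}}\, S(n), \qquad S(n) = \sum_{k} c_k, \quad c_k := t^{4abk^2}\bigl(t^{4(a+b)k+2} - t^{4(a-b)k-2}\bigr),
\]
where the index $k$ runs over the arithmetic progression $I_n := \{-(n-1)/2, \ldots, (n-1)/2\}$ of step $1$ (integers when $n$ is odd, half-integers when $n$ is even). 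As a preliminary check I would confirm that this returns $J_{T(a,b)}(0)=0$ (empty sum) and $J_{T(a,b)}(1)=1$ (the term $k=0$ gives $S(1)=t^2-t^{-2}$), matching the normalization of the introduction.

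The structural key is that the framing exponent is additive,
\[
ab\bigl((n+2)^2-1\bigr) = ab(n^2-1) + 4ab(n+1),
\]
so that $t^{-4ab(n+1)}\, J_{T(a,b)}(n)$ carries exactly the same prefactor $t^{-ab((n+2)^2-1)}$ as $J_{T(a,b)}(n+2)$. Writing $P(n) := S(n)/(t^2-t^{-2})$, this yields
\[
J_{T(a,b)}(n+2) - t^{-4ab(n+1)}\, J_{T(a,b)}(n) = t^{-ab((n+2)^2-1)}\bigl(P(n+2)-P(n)\bigr).
\]
Since $I_n \subset I_{n+2}$ with $I_{n+2}\setminus I_n = \{(n+1)/2,\, -(n+1)/2\}$, the entire bulk of the two sums cancels and only the two extreme new indices survive:
\[
P(n+2)-P(n) = \frac{c_{(n+1)/2} + c_{-(n+1)/2}}{t^2-t^{-2}}.
\]

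It then remains only to evaluate these two boundary terms. Using $ab((n+2)^2-1) - 2ab(n+1) = ab(n+1)^2$, the quadratic exponent $4abk^2$ at $k = \pm(n+1)/2$ cancels against the prefactor down to $t^{-2ab(n+1)}$, while the linear parts $4(a\pm b)k$ produce the monomials $t^{\pm 2(a+b)(n+1)}$ and $t^{\pm 2(a-b)(n+1)}$. Collecting the four terms with their signs gives
\[
t^{-ab((n+2)^2-1)}\bigl(c_{(n+1)/2}+c_{-(n+1)/2}\bigr) = t^{-2ab(n+1)}\bigl(t^2 \lambda_{(a+b)(n+1)} - t^{-2}\lambda_{(a-b)(n+1)}\bigr),
\]
which is precisely the claimed right-hand side after dividing by $t^2-t^{-2}$.

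I expect the only real obstacle to be bookkeeping: pinning down the normalization of the Rosso--Jones formula so that the framing-prefactor ratio is \emph{exactly} $t^{-4ab(n+1)}$ and the two surviving monomials reassemble into the stated combination of $\lambda$'s, together with the uniform-in-parity verification that $I_{n+2}\setminus I_n = \{\pm(n+1)/2\}$. Once these constants are fixed the argument is the short cancellation above; in particular no induction is required, since the identity is between two explicit Laurent polynomials.
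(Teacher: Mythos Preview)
Your approach is correct and is essentially the same as the paper's: the paper quotes Morton's formula $J_{T(a,b)}(n)=t^{-ab(n^2-1)}\sum_{j=-(n-1)/2}^{(n-1)/2} t^{4bj(aj+1)}[2aj+1]$, observes that the sum for $n+2$ is the sum for $n$ together with the two extreme indices $j=\pm(n+1)/2$, and simplifies those two boundary contributions to the stated $\lambda$-expression. Your summand $c_k$ differs from the paper's by the harmless substitution $j\mapsto -j$ in the second monomial (licit since the index range is symmetric), but the cancellation-of-the-bulk and evaluation-of-two-boundary-terms argument is identical.
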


\begin{proof}
By \cite{Mo}, we have
\begin{equation}
J_{T(a,b)}(n)=t^{-ab(n^2-1)} \sum_{j=-\frac{n-1}{2}}^{\frac{n-1}{2}} t^{4bj(aj+1)}[2aj+1],
\label{eq0}
\end{equation}
where $[k]:=(t^{2k}-t^{-2k})/(t^2-t^{-2})$.
Hence 
\begin{eqnarray*}
 J_{T(a,b)}(n+2) &=& t^{-ab((n+2)^2-1)} \sum_{j=-\frac{n+1}{2}}^{\frac{n+1}{2}} t^{4bj(aj+1)}[2aj+1]\\
&=& t^{-ab((n+2)^2-1)} \sum_{j=-\frac{n-1}{2}}^{\frac{n-1}{2}} t^{4bj(aj+1)}[2aj+1] + t^{-ab((n+2)^2-1)} \\
&& \times \, \Big( t^{b(n+1)(a(n+1)+2)}[a(n+1)+1]-t^{b(n+1)(a(n+1)-2)}[a(n+1)-1]\Big)\\
      &=&t^{-4ab(n+1)} \, J_{T(a,b)}(n)+t^{-2ab(n+1)} \, \frac{t^2\lambda_{(a+b)(n+1)}-t^{-2}\lambda_{(a-b)(n+1)}}{t^2-t^{-2}}.
\end{eqnarray*}
\end{proof}

\begin{lemma}
The colored Jones function of $T(a,b)$ is annihilated by the operator $F_{a,b}=c_3L^3+c_2L^2+c_1L+c_0$ where
\begin{eqnarray*}
c_3 &:=&  t^2 \Big( t^{2(a+b)}M^{a+b}+t^{-2(a+b)}M^{-(a+b)} \Big)- t^{-2}\Big( t^{2(a-b)}M^{a-b}+t^{-2(a-b)}M^{-(a-b)} \Big),\\
c_2 &:=& - t^{-2ab} \Big( t^2 \big( t^{4(a+b)}M^{a+b}+t^{-4(a+b)}M^{-(a+b)} \big) + t^{-2} \big( t^{4(a-b)}M^{a-b}+t^{-4(a-b)}M^{-(a-b)} \big) \Big),\\
c_1 &:=& - t^{-8ab} M^{-2ab} c_3,\\
c_0 &:=& - t^{-4ab}M^{-2ab} c_2.
\end{eqnarray*}
\label{alpha_ab}
\end{lemma}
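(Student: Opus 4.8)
The plan is to recognize $F_{a,b}$ as a product of a first-order operator with the second-order operator that records the recurrence of Lemma~\ref{lemab}, and then to check that this first-order factor annihilates the inhomogeneous term of that recurrence.

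First I would put Lemma~\ref{lemab} in operator form. Since $(L^2 f)(n)=f(n+2)$ and the multiplication operator $t^{-4ab(n+1)}$ equals $t^{-4ab}M^{-2ab}$, the recurrence reads $Q\,J_{T(a,b)}=\beta$, where
\[
Q:=L^2-t^{-4ab}M^{-2ab}, \qquad
\beta(n):=t^{-2ab(n+1)}\,\frac{t^2\lambda_{(a+b)(n+1)}-t^{-2}\lambda_{(a-b)(n+1)}}{t^2-t^{-2}}.
\]
Next I would verify the factorization $F_{a,b}=(c_3L+c_2)\,Q$. Expanding the product and moving $L$ past functions of $M$ with the rule $L\,h(M)=h(t^2M)\,L$ gives
\[
(c_3L+c_2)\,Q=c_3L^3+c_2L^2-t^{-8ab}M^{-2ab}c_3\,L-t^{-4ab}M^{-2ab}c_2,
\]
and the coefficients of $L$ and of $1$ are precisely the stated $c_1$ and $c_0$. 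Because $Q\,J_{T(a,b)}=\beta$, the lemma reduces to the single identity $(c_3L+c_2)\,\beta=0$.

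The crux is this last annihilation. The key observation is that $\beta$ is an evaluation of the leading coefficient: substituting $M=t^{2n}$ into $c_3$ reproduces exactly $t^2\lambda_{(a+b)(n+1)}-t^{-2}\lambda_{(a-b)(n+1)}$, so that
\[
\beta(n)=\frac{t^{-2ab(n+1)}}{t^2-t^{-2}}\,\big(c_3\big|_{M=t^{2n}}\big).
\]
Hence $\big((c_3L+c_2)\beta\big)(n)=c_3(t^{2n})\,\beta(n+1)+c_2(t^{2n})\,\beta(n)$ factors through $c_3(t^{2n})$, and its vanishing comes down to the polynomial identity $t^{-4ab}c_3(t^2M)+t^{-2ab}c_2=0$, i.e.\ $c_2=-t^{-2ab}c_3(t^2M)$, which I would confirm by substituting $M\mapsto t^2M$ in $c_3$ and comparing term by term with the definition of $c_2$.

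The main obstacle is entirely bookkeeping: tracking the powers of $t$ correctly through the non-commutative multiplication and through the evaluations at $M=t^{2n}$ and $M=t^{2(n+1)}$. The one conceptual point that makes everything collapse is the identification of the inhomogeneous term $\beta$ with a single evaluation of the leading coefficient $c_3$, which is exactly what lets the first-order operator $c_3L+c_2$ kill it.
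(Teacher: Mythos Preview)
Your argument is correct and is essentially the paper's proof, phrased more structurally. The paper simply writes down the three identities
\[
c_3t^{-4ab(n+2)}+c_1=0,\qquad c_2t^{-4ab(n+1)}+c_0=0,\qquad c_3\,A(n{+}1)+c_2t^{2ab}A(n)=0
\]
(with $A(n)=t^2\lambda_{(a+b)(n+1)}-t^{-2}\lambda_{(a-b)(n+1)}$) and then substitutes Lemma~\ref{lemab} into $F_{a,b}J_{T(a,b)}(n)$; the first two identities are exactly your factorization $F_{a,b}=(c_3L+c_2)(L^2-t^{-4ab}M^{-2ab})$, and the third is exactly your claim $(c_3L+c_2)\beta=0$. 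Your extra observation that $A(n)=c_3|_{M=t^{2n}}$, so that the third identity collapses to the single polynomial relation $c_2=-t^{-2ab}\,c_3(t^2M)$, is a genuine simplification over the paper's ``easy to check''.

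One warning about that final step: if you literally carry out the comparison with the $c_2$ printed in the statement, it fails---the ``$+\,t^{-2}$'' in front of the $(a-b)$ block of $c_2$ should be ``$-\,t^{-2}$''. This is a typo in the statement, not a flaw in your method: the paper's own third identity above, as well as the subsequent computation $\varepsilon(\alpha_{T(a,b)})=M^{-2ab}(M^a-M^{-a})(M^b-M^{-b})(L-1)(L^2M^{2ab}-1)$, both hold only with the minus sign. With that correction your verification $c_2=-t^{-2ab}c_3(t^2M)$ goes through immediately.
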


\begin{proof}
It is easy to check that $c_3t^{-4ab(n+2)} + c_1 = c_2 t^{-4ab(n+1)} + c_0 = 0$ and
\begin{eqnarray*}
c_3 \big( t^2\lambda_{(a+b)(n+2)}-t^{-2}\lambda_{(a-b)(n+2)} \big)+c_2t^{2ab}\big( t^2\lambda_{(a+b)(n+1)}-t^{-2}\lambda_{(a-b)(n+1)} \big) &=& 0.
\end{eqnarray*}
Hence, by Lemma \ref{lemab},  $F_{a,b} \, J_{T(a,b)}(n)$ is equal to
\begin{eqnarray*}
 && c_3 \, J_{T(a,b)}(n+3) +c_2 \, J_{T(a,b)}(n+2) + c_1 \, J_{T(a,b)}(n+1) + c_0 \, J_{T(a,b)}(n)\\
      &=& c_3 \Big( t^{-4ab(n+2)} \, J_{T(a,b)}(n+1)+t^{-2ab(n+2)} \, \frac{t^2 \lambda_{(a+b)(n+2)}-t^{-2} \lambda_{(a-b)(n+2)}}{t^2-t^{-2}}  \Big)\\
      && + \, c_2 \Big( t^{-4ab(n+1)} \, J_{T(a,b)}(n)+t^{-2ab(n+1)} \, \frac{t^2 \lambda_{(a+b)(n+1)}-t^{-2} \lambda_{(a-b)(n+1)}}{t^2-t^{-2}} \Big) \\
      && + \, c_1 \, J_{T(a,b)}(n+1) + c_0 \, J_{T(a,b)}(n)\\
      &=& (c_3 t^{-4ab(n+2)} + c_1) \, J_{T(a,b)}(n+1) + (c_2 t^{-4ab(n+1)} + c_0) \, J_{T(a,b)}(n) \\
      && + \,  t^{-2ab(n+1)}\Big(  c_3 \, \frac{t^2\lambda_{(a+b)(n+2)}-t^{-2}\lambda_{(a-b)(n+2)}}{t^2-t^{-2}} + c_2t^{2ab}  \, \frac{t^2\lambda_{(a+b)(n+1)}-t^{-2}\lambda_{(a-b)(n+1)}}{t^2-t^{-2}} \Big)\\
      &=& 0.
\end{eqnarray*}
This proves Lemma \ref{alpha_ab}.
\end{proof}

Recall that $\alpha_{T(a,b)}$ is the recurrence polynomial of $T(a,b)$.

\begin{proposition}
For  $a,b>2$, one has $\alpha_{T(a,b)} =F_{a,b}.$
\label{prop}
\end{proposition}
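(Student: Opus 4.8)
The plan is to show that the operator $F_{a,b}$ of Lemma \ref{alpha_ab} is, up to the unavoidable ambiguity, the recurrence polynomial itself. By Lemma \ref{alpha_ab} we have $F_{a,b}\in\CA_{T(a,b)}\subseteq\widetilde\CA_{T(a,b)}=\widetilde\CT\,\alpha_{T(a,b)}$, so $\alpha:=\alpha_{T(a,b)}$ left-divides $F_{a,b}$; write $F_{a,b}=R\,\alpha$ with $R\in\widetilde\CT$. Since $\widetilde\CT$ is a domain in which the $L$-degrees of a product add, and $\deg_L F_{a,b}=3$, it suffices to establish two facts: (A) $\deg_L\alpha=3$, and (B) the coefficients $c_0,c_1,c_2,c_3$ of $F_{a,b}$ are coprime. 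Granting (A), the factor $R$ has $L$-degree $0$, i.e. $R=f(M)\in\CR(M)$, and writing $f=p/q$ in lowest terms in $\BC(t)[M]$ the identity $q\,c_j=p\,\alpha_{K,j}$ together with the coprimality of the $\alpha_{K,j}$ and (B) forces both $p$ and $q$ to be units; integrality then gives $f=\pm t^{k}$, so $F_{a,b}=\alpha$ up to a factor in $\BZ[t^{\pm1},M]$, which is exactly the asserted equality.

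The heart of the matter is (A): no nonzero element of $\widetilde\CT$ of $L$-degree at most $2$ annihilates $J:=J_{T(a,b)}$, equivalently $J,\,LJ,\,L^2J$ are linearly independent over $\CR(M)$. I would prove this by a $t$-degree analysis based on the explicit formula \eqref{eq0}. First I would determine the bottom $t$-degree of $J(n)$ as a function of $n$ and check that its extremal coefficient comes from a single value of the summation index $j$, so that the extremal term is simple and cannot cancel. Rewriting Lemma \ref{lemab} in operator form as $(L^2-t^{-4ab}M^{-2ab})J=h$, where $h$ is the sequence $t^{-2ab(n+1)}\big(t^2\lambda_{(a+b)(n+1)}-t^{-2}\lambda_{(a-b)(n+1)}\big)/(t^2-t^{-2})$ and is a sum of exactly four exponentials in $n$, a hypothetical relation $b_2L^2J+b_1LJ+b_0J=0$ with $b_i\in\CR(M)$ reduces, after substituting for $L^2J$, to a first-order inhomogeneous recurrence $J(n+1)=\rho(n)J(n)+\phi(n)$ with $\rho\in\CR(M)$ and $\phi$ a four-exponential sequence. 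The contradiction comes from comparing growth rates: the bottom $t$-degree of $J(n)$ grows quadratically in $n$ (it is governed by the prefactor $t^{-ab(n^2-1)}$), whereas the four-exponential correction $\phi$ and its shifts contribute only linearly. The one delicate point — and the main obstacle — is the degenerate subcase in which the quadratic parts formally cancel, forcing $\rho(n+1)\rho(n)=t^{-4ab(n+1)}$ and hence $\rho=\pm t^{-ab}M^{-ab}$; this residual case I would rule out by substituting \eqref{eq0} into $J(n+1)-\rho(n)J(n)$ and checking that the result is genuinely not a four-exponential sequence.

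For (B), the explicit relations $c_1=-t^{-8ab}M^{-2ab}c_3$ and $c_0=-t^{-4ab}M^{-2ab}c_2$ show that, after clearing the monomial $M$-denominators so that all coefficients lie in $\BZ[t^{\pm1},M]$, the content of $F_{a,b}$ equals, up to a power of $M$, the greatest common divisor of $c_2$ and $c_3$. I would then verify that $c_2$ and $c_3$ have no common factor in $\BC(t)[M]$ — for instance by computing their resultant in $M$ and observing that it is a nonzero element of $\BC[t^{\pm1}]$, or equivalently by checking directly that the system $c_2=c_3=0$ has no solution $M$ for generic $t$. Combining (A) and (B) as in the first paragraph yields $\alpha_{T(a,b)}=F_{a,b}$, proving the proposition.
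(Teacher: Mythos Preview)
Your overall architecture is the same as the paper's: reduce to proving that no nonzero operator of $L$-degree $\le 2$ annihilates $J_{T(a,b)}$, and observe that $F_{a,b}$ then must be the recurrence polynomial (your point (B) about coprimality of the $c_j$ is stated more carefully than in the paper, which leaves it implicit). Where you diverge is in the mechanism for ruling out a degree-$\le 2$ annihilator. You iterate a hypothetical first-order recurrence $J(n+1)=\rho(n)J(n)+\phi(n)$, compare with the inhomogeneous two-step recurrence of Lemma~\ref{lemab}, and use a quadratic-versus-linear degree count, which leaves the residual case $\rho=\pm t^{-ab}M^{-ab}$ to be disposed of by an ad hoc computation. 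That computation is doable, but it is where the work hides, and your sketch (``check that $J(n+1)-\rho(n)J(n)$ is not a four-exponential sequence'') is the least developed part of the argument.

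The paper replaces all of this by a single observation. After the same substitution for $J(n+2)$ one lands at $P'_2\,J(n)+P_1\,J(n+1)+P'_0=0$ with $P'_2,P_1,P'_0\in\BC[t^{\pm1},M]$, exactly as you do. The paper then computes the lowest $t$-degree of $J_{T(a,b)}(n)$ explicitly:
\[
l_n=-abn^2+ab+\tfrac{1}{2}\bigl(1-(-1)^{n-1}\bigr)(a-2)(b-2),
\]
and notes that $l_{n+1}-l_n=-ab(2n+1)-(-1)^n(a-2)(b-2)$ has a genuine $(-1)^n$ oscillation precisely because $a,b>2$. Since the lowest $t$-degrees of $P'_2(t,t^{2n})$ and $P_1(t,t^{2n})$ are eventually polynomial in $n$, comparing extremal degrees in $P'_2\,J(n)+P_1\,J(n+1)=-P'_0$ is already a contradiction, with no degenerate subcase to chase. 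In short, your quadratic-versus-linear heuristic is replaced by a parity-oscillation argument that kills the problem in one line; you may want to adopt it, as it also makes transparent why the case $a=2$ (where the oscillating term vanishes) must be handled differently.
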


\begin{proof}
By Lemma \ref{alpha_ab} it suffices to show that if an operator $P=P_2L^2+P_1L+P_0$, where $P_j$'s are polynomials in $\BC[t^{\pm 1},M]$, annihilates the colored Jones polynomials of $T(a,b)$ then $P=0.$ 

Indeed, suppose $P \, J_{T(a,b)}(n)=0$. Then, by Lemma \ref{lemab},
\begin{eqnarray*}
0 &=& P_2 \, J_{T(a,b)}(n+2) + P_1 \, J_{T(a,b)}(n+1) + P_0 \, J_{T(a,b)}(n)\\
  &=& P_2 \Big( t^{-4ab(n+1)} \, J_{T(a,b)}(n)+t^{-2ab(n+1)} \, \frac{t^2 \lambda_{(a+b)(n+1)}-t^{-2} \lambda_{(a-b)(n+1)}}{t^2-t^{-2}} \Big) \\
  && + \, P_1 \, J_{T(a,b)}(n+1) + P_0 \, J_{T(a,b)}(n)\\
  &=& (t^{-4ab(n+1)}P_2+P_0) \, J_{T(a,b)}(n) + P_1 \, J_{T(a,b)}(n+1) \\
  && + \, P_2 \, t^{-2ab(n+1)} \, \frac{t^2\lambda_{(a+b)(n+1)}-t^{-2} \lambda_{(a-b)(n+1)}}{t^2-t^{-2}}.
\end{eqnarray*}

Let $P'_2=t^{-4ab(n+1)}P_2+P_0$ and $P'_0=P_2 \, t^{-2ab(n+1)} \, \dfrac{t^2\lambda_{(a+b)(n+1)}-t^{-2}\lambda_{(a-b)(n+1)}}{t^2-t^{-2}}.$ Then 
\begin{equation}
P'_2 \, J_{T(a,b)}(n)+P_1 \, J_{T(a,b)}(n+1)+P'_0=0.
\label{eq1}
\end{equation}
Note that $P'_2$ and $P'_0$ are polynomials in $\BC[t^{\pm 1},M]$. We need the following lemma.

\begin{lemma}
The lowest degree in $t$ of $J_{T(a,b)}(n)$ is $$l_n=-abn^2+ab+\frac{1}{2}(1-(-1)^{n-1})(a-2)(b-2).$$
\label{deg}
\end{lemma}

\begin{proof}
From \eqref{eq0}, it follows easily that $l_n=-abn^2+ab$ if $n$ is odd, and $l_n=(-abn^2+ab)+(ab-2b-2a+4)$ if $n$ is even. 
\end{proof}

Suppose $P'_2, P_1 \not= 0$. Let $r_n$ and $s_n$ be the lowest degrees (in $t$) of $P'_2$ and $P_1$ respectively. Note that, when $n$ is large enough, $r_n$ and $s_n$ are polynomials in $n$ of degrees at most $1$. Equation \eqref{eq1} then implies that $r_n+l_n=s_n+l_{n+1},$ i.e. $$r_n-s_n=l_{n+1}-l_n=-ab(2n+1)-(-1)^n(a-2)(b-2).$$
This cannot happen since the LHS is a polynomial in $n$, when $n$ is large enough, while the RHS is not (since $(a-2)(b-2)>0$). Hence $P'_2=P_1=P'_0=0$, which means $P=0.$ 
\end{proof}

Let us complete the proof of Proposition \ref{prop}. It is easy to see that $\varepsilon(\alpha_{T(a,b)})=M^{-2ab}(M^a-M^{-a})(M^b-M^{-b})A_{T(a,b)}$ where $A_{T(a,b)}=(L-1)(L^2M^{2ab}-1)$ is the $A$-polynomial of $T(a,b)$ when $a,b>2$. This means the AJ conjecture holds true for $T(a,b)$ when $a,b>2.$

\subsection{The case $a=2$} 

\begin{lemma} One hase
\begin{eqnarray*} 
J_{T(2,b)}(n+1) &=& -t^{-(4n+2)b} \, J_{T(2,b)}(n)+t^{-2nb}[2n+1].
\end{eqnarray*}
\label{lem2b}
\end{lemma}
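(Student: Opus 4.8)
The plan is to mirror the proof of Lemma \ref{lemab}, starting from the Morton formula \eqref{eq0} specialized to $a=2$, but the passage from color $n$ to color $n+1$ forces a new idea. Setting $S_n := \sum_{j=-(n-1)/2}^{(n-1)/2} t^{4bj(2j+1)}[4j+1]$, so that $J_{T(2,b)}(n) = t^{-2b(n^2-1)} S_n$, a direct bookkeeping of the powers of $t$ shows that the asserted recurrence is equivalent to the single identity
$$S_{n+1} + S_n = t^{2bn(n+1)}[2n+1].$$
First I would carry out this reduction, so that the whole lemma becomes the verification of the displayed equation.

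The main obstacle is that, unlike the step $n \to n+2$ used in Lemma \ref{lemab}, the summation index in $S_n$ and in $S_{n+1}$ runs over sets differing by a half-integer shift (integers for one parity of $n$, half-integers for the other), so one cannot simply peel off the extreme terms and recognize the remainder as $S_n$. To get around this I would reindex by $i := 2j+1$, which rewrites the summand as $f(i) := t^{2bi(i-1)}[2i-1]$ and presents $S_n$ as the sum of $f(i)$ over the integers $i \equiv n \pmod 2$ with $2-n \le i \le n$, and $S_{n+1}$ as the sum of $f(i)$ over the integers $i \equiv n+1 \pmod 2$ with $1-n \le i \le n+1$.

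With this normalization the key observation is that the two index sets have opposite parities and together exhaust \emph{every} integer of the interval $[1-n,\,n+1]$, whence $S_{n+1} + S_n = \sum_{i=1-n}^{n+1} f(i)$. The final step exploits the antisymmetry $f(1-i) = -f(i)$, which follows from $(1-i)(-i) = i(i-1)$ together with $[1-2i] = -[2i-1]$: pairing $i$ with $1-i$ over the symmetric range $[1-n,\,n]$ gives $\sum_{i=1-n}^{n} f(i) = 0$, leaving only the top term $f(n+1) = t^{2bn(n+1)}[2n+1]$, which is exactly the right-hand side of the reduced identity. I expect the verification of the antisymmetry and the parity/range bookkeeping to be the only delicate points; the remaining manipulations are routine.
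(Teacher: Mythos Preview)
Your argument is correct. Both your proof and the paper's hinge on the same antisymmetry: writing the summand as $f(i)=t^{2bi(i-1)}[2i-1]$ (equivalently, the original summand in $j$ with $i=2j+1$), one has $f(1-i)=-f(i)$, which in the paper's variable is precisely the statement that the substitution $k=-(j+\tfrac12)$ sends $t^{4bk(2k+1)}[4k+1]$ to its negative.

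The packaging differs slightly. The paper applies the substitution $k=-(j+\tfrac12)$ directly to the sum defining $S_{n+1}$, which in one stroke converts it into $-S_n$ plus the single extra term at $j=-(n+1)/2$; there is no need to merge two sums or track parities of indices. Your route---reindex by $i=2j+1$, combine $S_n+S_{n+1}$ into the full sum $\sum_{i=1-n}^{n+1} f(i)$, then pair-cancel via $i\leftrightarrow 1-i$---reaches the same destination and makes the mechanism perhaps more transparent, at the cost of the extra bookkeeping on parities and ranges that you flagged. Either way the heart of the proof is the single observation $f(1-i)=-f(i)$.
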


\begin{proof}
By \eqref{eq0}, we have
$$J_{T(2,b)}(n)=t^{-2b(n^2-1)} \sum_{j=-\frac{n-1}{2}}^{\frac{n-1}{2}} t^{4bj(2j+1)}[4j+1].$$
Hence
\begin{eqnarray*}
J_{T(2,b)}(n+1)=t^{-2b((n+1)^2-1)} \sum_{k=-\frac{n}{2}}^{\frac{n}{2}} t^{4bk(2k+1)}[4k+1]
\end{eqnarray*}
Set $k=-(j+\frac{1}{2}).$ Then
\begin{eqnarray*}
J_{T(2,b)}(n+1) &=& t^{-2b((n+1)^2-1)} \sum_{j=\frac{n-1}{2}}^{-\frac{n+1}{2}} t^{4bj(2j+1)}[-(4j+1)]\\
      &=& t^{-2b((n+1)^2-1)} \Big( -\sum_{j=-\frac{n-1}{2}}^{\frac{n-1}{2}} t^{4bj(2j+1)}[4j+1]+t^{2bn(n+1)}[2n+1] \Big)\\
      &=&-t^{-(4n+2)b} \, J_{T(2,b)}(n)+t^{-2nb}[2n+1].
\end{eqnarray*}
This proves Lemma \ref{lem2b}.
\end{proof}

\begin{lemma}
The colored Jones function of $T(2,b)$ is annihilated by the operator $G_{2,b}=d_2L^2+d_1L+d_0$ where
\begin{eqnarray*}
d_2 &:=& t^2 M^2 - t^{-2}M^{-2},\\
d_1 &:=& t^{-2b} \Big( t^{-4b}M^{-2b} (t^2 M^2 - t^{-2} M^{-2}) - (t^6 M^2 - t^{-6} M^{-2}) \Big),\\
d_0 &:=& -t^{-4b} M^{-2b}(t^6 M^2 - t^{-6} M^{-2}).
\end{eqnarray*}
\label{alpha_2b}
\end{lemma}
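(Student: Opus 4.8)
The plan is to verify that $G_{2,b}$ annihilates $J_{T(2,b)}$ by the same bookkeeping strategy used in Lemma \ref{alpha_ab}. Applying the recurrence of Lemma \ref{lem2b} twice, I would first express $J_{T(2,b)}(n+2)$ in terms of $J_{T(2,b)}(n)$ and the inhomogeneous terms $t^{-2nb}[2n+1]$ and $t^{-2(n+1)b}[2n+3]$. Concretely, $J_{T(2,b)}(n+1) = -t^{-(4n+2)b}J_{T(2,b)}(n)+t^{-2nb}[2n+1]$, and a second application gives $J_{T(2,b)}(n+2) = -t^{-(4n+6)b}J_{T(2,b)}(n+1)+t^{-2(n+1)b}[2n+3]$. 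Substituting the first into the second then writes everything in terms of $J_{T(2,b)}(n)$, $[2n+1]$, and $[2n+3]$.

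Next I would translate the three coefficients $d_2,d_1,d_0$ into their operator action: $M^2$ acts on the $n$-th term as multiplication by $t^{2n}$ raised to the appropriate power, since $(Mf)(n)=t^{2n}f(n)$. So $G_{2,b}J_{T(2,b)}(n) = d_2 J_{T(2,b)}(n+2)+d_1 J_{T(2,b)}(n+1)+d_0 J_{T(2,b)}(n)$, where each $d_j=d_j(t,M)$ is evaluated with $M^{\pm 2}\mapsto t^{\pm 2n}$ acting on the homogeneous part. After substituting the expressions from the previous paragraph, the $J_{T(2,b)}(n)$-coefficient should collapse to zero, and the coefficients multiplying $[2n+1]$ and $[2n+3]$ should each vanish as well. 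The structure of $d_2,d_1,d_0$ — in particular the factor $t^{-4b}M^{-2b}$ appearing in both $d_1$ and $d_0$, and the two distinct brackets $(t^2M^2-t^{-2}M^{-2})$ and $(t^6M^2-t^{-6}M^{-2})$ — is exactly what is needed to cancel the two bracket terms $[2n+1]$ and $[2n+3]$ separately, so I expect the verification to reduce to a handful of elementary identities in $t$ and $t^{2n}$.

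The only mildly delicate point is handling the brackets $[k]=(t^{2k}-t^{-2k})/(t^2-t^{-2})$ correctly, since $[2n+3]$ and $[2n+1]$ do not combine into a single closed form the way $\lambda_k$ did in the $a,b>2$ case; here one must keep track of the shift $n\mapsto n+1$ in the second bracket and confirm that the operator coefficients align so that the $[2n+1]$ and $[2n+3]$ contributions cancel independently. I expect this to be the main (though still routine) obstacle, and I would organize the check by grouping terms according to which bracket they multiply, verifying $d_2 t^{-2(n+1)b}$ cancels against the $[2n+3]$ terms and that the remaining $[2n+1]$ contributions from $d_2$, $d_1$ also cancel. Once $G_{2,b}J_{T(2,b)}=0$ is established, the lemma is proved.
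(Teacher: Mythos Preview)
Your approach is exactly the paper's: iterate Lemma \ref{lem2b} to write $J_{T(2,b)}(n+2)=t^{-8(n+1)b}J_{T(2,b)}(n)-t^{-6(n+1)b}[2n+1]+t^{-2(n+1)b}[2n+3]$, substitute, and check that the $J_{T(2,b)}(n)$--coefficient and the inhomogeneous remainder both vanish.

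One correction to your expected bookkeeping, though: the $[2n+1]$ and $[2n+3]$ contributions do \emph{not} cancel independently. After substitution the only source of $[2n+3]$ is $d_2\,t^{-2(n+1)b}[2n+3]$, and $d_2$ evaluated at $M\mapsto t^{2n}$ is $(t^2-t^{-2})[2n+1]\neq 0$. What actually happens is that $t^2M^2-t^{-2}M^{-2}\mapsto(t^2-t^{-2})[2n+1]$ and $t^6M^2-t^{-6}M^{-2}\mapsto(t^2-t^{-2})[2n+3]$, so once you evaluate the $d_j$'s the inhomogeneous part becomes a combination of the products $[2n+1]^2$ and $[2n+1][2n+3]$, and it is these two products whose coefficients vanish separately. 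The paper records this as the single identity
\[
d_2\bigl(-t^{-6(n+1)b}[2n+1]+t^{-2(n+1)b}[2n+3]\bigr)+d_1\,t^{-2nb}[2n+1]=0,
\]
together with $t^{-8(n+1)b}d_2-t^{-(4n+2)b}d_1+d_0=0$ for the homogeneous part. So your plan works, just with the grouping organized by $[2n+1]^2$ and $[2n+1][2n+3]$ rather than by $[2n+1]$ and $[2n+3]$.
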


\begin{proof}
From Lemma \ref{lem2b} we have
\begin{eqnarray*}
J_{T(2,b)}(n+1) &=& -t^{-(4n+2)b} \, J_{T(2,b)}(n)+t^{-2nb}[2n+1],\\
J_{T(2,b)}(n+2) &=& t^{-8(n+1)b} J\, _{T(2,b)}(n)-t^{-6(n+1)b}[2n+1]+t^{-2(n+1)b}[2n+3].
\end{eqnarray*}
It is easy to check that 
\begin{eqnarray*}
t^{-8(n+1)b}d_2-t^{-(4n+2)b}d_1+d_0 &=& 0,\\
d_2 \Big( -t^{-6(n+1)b}[2n+1]+t^{-2(n+1)b}[2n+3] \Big) + d_1 t^{-2nb}[2n+1] &=& 0.
\end{eqnarray*}
Hence
\begin{eqnarray*}
G_{2,b} \, J_{T(2,b)}(n) &=& d_2 \, J_{T(2,b)}(n+2) + d_1 \, J_{T(2,b)}(n+1) + d_0 \, J_{T(2,b)}(n)\\
                  &=& d_2 \Big( t^{-8(n+1)b} \, J_{T(2,b)}(n)-t^{-6(n+1)b}[2n+1]+t^{-2(n+1)b}[2n+3] \Big)\\
                  && + \, d_1 \Big( -t^{-(4n+2)b} \, J_{T(2,b)}(n)+t^{-2nb}[2n+1] \Big) + d_0 \, J_{T(2,b)}(n)\\
                  &=& \Big( t^{-8(n+1)b}d_2-t^{-(4n+2)b}d_1+d_0 \Big) \, J_{T(2,b)}(n) \\
                  && + \, d_2 \Big( -t^{-6(n+1)b}[2n+1]+t^{-2(n+1)b}[2n+3] \Big) + d_1 t^{-2nb}[2n+1]\\
                  &=& 0.
\end{eqnarray*}
This proves Lemma \ref{alpha_2b}.
\end{proof}

\begin{proposition}
One has $\alpha_{T(2,b)} =G_{2,b}.$
\end{proposition}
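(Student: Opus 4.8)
**

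The goal is to show that the operator $G_{2,b}$ from Lemma \ref{alpha_2b} is actually the \emph{minimal} recurrence polynomial $\alpha_{T(2,b)}$. By the same reasoning as in Proposition \ref{prop}, it suffices to prove that no nonzero first-order operator annihilates the colored Jones function; that is, if $P = P_1 L + P_0$ with $P_0, P_1 \in \BC[t^{\pm 1}, M]$ satisfies $P\, J_{T(2,b)} = 0$, then $P = 0$. Since $G_{2,b}$ is a genuine second-order operator (its coefficients $d_2, d_0$ are nonzero polynomials in $M$), ruling out order-one annihilators forces $\alpha_{T(2,b)} = G_{2,b}$ up to a $\BC[t^{\pm1},M]$-factor.

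**Plan.**

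The plan is to mimic the degree-counting argument of Proposition \ref{prop}, adapted to the first-order setting. First I would establish the analogue of Lemma \ref{deg}: the lowest degree in $t$ of $J_{T(2,b)}(n)$, read off directly from the summation formula in the proof of Lemma \ref{lem2b}. Specializing $a = 2$ in Lemma \ref{deg} gives $l_n = -2bn^2 + 2b$ (the parity correction term $\tfrac12(1-(-1)^{n-1})(a-2)(b-2)$ vanishes identically when $a = 2$), so for $T(2,b)$ the lowest degree is simply $l_n = -2b(n^2 - 1)$ with no parity dependence. I would record this as the key estimate.

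Next, assuming $P = P_1 L + P_0$ annihilates $J_{T(2,b)}$, I would use Lemma \ref{lem2b} to rewrite $P_1\, J_{T(2,b)}(n+1) + P_0\, J_{T(2,b)}(n) = 0$ in the form
\begin{equation*}
\bigl(-t^{-(4n+2)b} P_1 + P_0\bigr)\, J_{T(2,b)}(n) + P_1\, t^{-2nb}[2n+1] = 0,
\end{equation*}
where the inhomogeneous term $t^{-2nb}[2n+1]$ replaces the $\lambda$-expression from the $a,b>2$ case. Writing $P_0' := -t^{-(4n+2)b} P_1 + P_0$, both $P_0'$ and $P_1$ are polynomials in $\BC[t^{\pm 1}, M]$. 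Supposing $P_1 \neq 0$, I would compare the lowest $t$-degrees of the two terms: letting $s_n$ and $u_n$ denote the lowest degrees of $P_0'$ and $P_1$ respectively (each a polynomial in $n$ of degree at most $1$ for large $n$), the relation forces $s_n + l_n = u_n + (\text{lowest degree of } t^{-2nb}[2n+1])$. Since $[2n+1]$ has lowest $t$-degree $-2(2n+1)$, the right side carries a linear-in-$n$ term while $l_n$ is quadratic in $n$, producing a contradiction for large $n$ exactly as before.

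**Main obstacle.**

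The step I expect to require the most care is confirming that the inhomogeneous term does not cancel against anything, i.e. that $P_1\, t^{-2nb}[2n+1] \neq 0$ whenever $P_1 \neq 0$, and that its leading behavior cannot be absorbed. Unlike the second-order case, here there is no third function to play against, so the contradiction rests entirely on the quadratic growth of $l_n$ versus the linear growth of the degrees of $P_0', P_1$ and of $[2n+1]$. I would verify that if $P_1 = 0$ then $P_0' = P_0 = 0$ follows immediately, and if $P_1 \neq 0$ the degree mismatch (quadratic versus linear in $n$) is genuine — this is where $b \geq 3$, hence $b \geq 1$, ensuring $l_n$ is truly quadratic, is used. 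Concluding $P_1 = P_0 = 0$ gives $P = 0$, whence $\alpha_{T(2,b)} = G_{2,b}$ as claimed.
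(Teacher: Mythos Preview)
Your proposal is correct and follows essentially the same route as the paper: reduce to showing no nonzero first-order operator annihilates $J_{T(2,b)}$, use Lemma~\ref{lem2b} to rewrite the relation as $P_0'\,J_{T(2,b)}(n) + P_1\,t^{-2nb}[2n+1] = 0$, and conclude by contrasting the quadratic lowest $t$-degree $l_n = -2bn^2 + 2b$ of $J_{T(2,b)}(n)$ (from Lemma~\ref{deg} with $a=2$) against the at-most-linear degrees of the remaining pieces. The paper's argument is identical in structure; only the labeling of the auxiliary polynomials differs.
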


\begin{proof}
By Lemma \ref{alpha_2b}, it suffices to show that if an operator $P=P_1L+P_0$, where $P_j$'s are polynomials in $\BC[t^{\pm 1},M]$, annihilates the colored Jones polynomials of $T(2,b)$ then $P=0.$ 

Indeed, suppose $P \, J_{T(2,b)}(n)=0$. Then
\begin{eqnarray*}
0 &=& P_1 \, J_{T(2,b)}(n+1) + P_0 \, J_{T(2,b)}(n)\\
  &=& P_1 \Big( -t^{-(4n+2)b} \, J_{T(2,b)}(n)+t^{-2nb}[2n+1] \Big) + P_0 \, J_{T(2,b)}(n)\\
  &=& \big(-t^{-(4n+2)b}P_1+P_0\big) \, J_{T(2,b)}(n) + t^{-2nb}[2n+1] P_1.
\end{eqnarray*}
Let $P'_1=-t^{-(4n+2)b}P_1+P_0$ and $P'_0=t^{-2nb}[2n+1] P_1$. Then $P'_1, P'_0$ are polynomials in $\BC[t^{\pm 1},M]$ and $P_1'J(n)+P'_0=0.$ This implies that $P_1'=P_0'=0$ since the lowest degree in $t$ of $J_{T(2,b)}(n)$ is $-2bn^2+2b$, which is quadratic in $n$, by Lemma \ref{deg}. Hence $P=0.$ 
\end{proof}

It is easy to see that $\varepsilon(\alpha_{T(2,b)})=M^{-2b}(M^2-M^{-2})A_{T(2,b)}$ where $A_{T(2,b)}=(L-1)(LM^{2b}+1)$ is the $A$-polynomial of $T(2,b)$. This means the AJ conjecture holds true for $T(2,b)$.

\section{Proof of Theorem \ref{th2}}

\label{ver}

As in the previous section, we consider the two cases: $a,b>2$ and $a=2$ separately.

\subsection{The case $a,b >2$}  We claim that

\begin{proposition}
 The colored Jones function of $T(a,b)$ is annihilated by the operator $PQ$ where
\begin{eqnarray*}
P &=& t^{-10ab}(L^3M^{2ab}+L^{-3}M^{-2ab})-(t^{2(a-b)}+t^{2(b-a)})t^{-4ab}(L^2M^{2ab}+L^{-2}M^{-2ab})\\
&&+\,t^{2ab}(LM^{2ab}+L^{-1}M^{-2ab})-(t^{2ab}+t^{-2ab})(L+L^{-1})\\
&& + \, (t^{2(a-b)}+t^{2(b-a)})(t^{4ab}+t^{-4ab}),\\
Q &=& t^{-6ab}(L^3M^{2ab}+L^{-3}M^{-2ab})-(t^{2(a+b)}+t^{-2(a+b)})q^{-ab}(L^2M^{2ab}+L^{-2}M^{-2ab})\\
&&+\,t^{-2ab}(LM^{2ab}+L^{-1}M^{-2ab})-(t^{2ab}+t^{-2ab})(L+L^{-1})+2(t^{2(a+b)}+t^{-2(a+b)}).
\end{eqnarray*}
\label{PQ}
\end{proposition}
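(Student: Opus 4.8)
The plan is to show that $PQ$ lies in the recurrence ideal $\CA_{T(a,b)}$, which is exactly the assertion that $PQ$ annihilates $J:=J_{T(a,b)}$. By Lemma \ref{alpha_ab} we already know that $F:=F_{a,b}$ annihilates $J$, so the idea is to exhibit $PQ$ inside the left ideal of $\CT$ generated by $F$ together with its image $\sigma(F)$. First I would record a second, ``reflected'' annihilator for free. The colored Jones function satisfies $J(-n)=-J(n)$, so the reflection $R\colon f(n)\mapsto f(-n)$ sends $J$ to $-J$; a direct check on generators gives $RMR^{-1}=M^{-1}$ and $RLR^{-1}=L^{-1}$ (and $R$ fixes the scalar $t$), so conjugation by $R$ agrees with $\sigma$ on all of $\CT$. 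Applying $R$ to $F\,J=0$ and using $RJ=-J$ then yields $\sigma(F)\,J=0$. Hence both $F$ and $\sigma(F)$ lie in $\CA_{T(a,b)}$, and so does any left combination $U\,F+V\,\sigma(F)$ with $U,V\in\CT$.

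The heart of the proof is then the purely algebraic identity
\be PQ \;=\; U\,F_{a,b}\;+\;V\,\sigma(F_{a,b}),\qquad V=\sigma(U),\ee
for a suitable $U\in\CT$; the choice $V=\sigma(U)$ is available because $PQ$ is $\sigma$-invariant (replace any solution by its $\sigma$-average), and it halves the work. I would verify \eqref{...} by normalizing $F_{a,b}$ through its leading coefficient $c_3$ and then expanding both sides in the quantum torus, comparing the coefficient of each power $L^{k}$, $-6\le k\le 6$, while tracking the $t$-powers produced by $LM=t^2ML$. The relations $c_1=-t^{-8ab}M^{-2ab}c_3$ and $c_0=-t^{-4ab}M^{-2ab}c_2$, together with the palindromic (i.e.\ $\sigma$-invariant) shape of $P$ and $Q$, make many of these comparisons collapse in $\sigma$-paired families, so in practice only about half of the $13$ coefficient equations need be checked. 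Granting the identity, $PQ\,J=U(F_{a,b}J)+V(\sigma(F_{a,b})J)=0$, which is the claim.

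I expect the coefficient identity itself to be the main obstacle, and I do not think it can be shortened by splitting off $P$ or $Q$: reducing at $t=-1$ gives $\varepsilon(P)=\varepsilon(Q)=L^{-3}M^{-2ab}A_{T(a,b)}^{2}$, which is visibly not a left multiple of $\varepsilon(F_{a,b})=(DL-E)M^{-2ab}(L^2M^{2ab}-1)$ where $D=(M^a-M^{-a})(M^b-M^{-b})$ and $E=(M^a+M^{-a})(M^b+M^{-b})$, strongly suggesting that neither $P$ nor $Q$ alone annihilates $J$ and that the required cancellation occurs only in the product. (Incidentally this computation of $\varepsilon(P)$ also explains the role of Proposition \ref{PQ} in Theorem \ref{th2}: since $\varepsilon(PQ)=\varepsilon(P)^2$ is a power of $A_{T(a,b)}$ up to the unit $L^{-6}M^{-4ab}$, its radical recovers the $A$-ideal.) As a fallback that avoids guessing $U$, one can instead perform the right division of $PQ$ by $F_{a,b}$ in the Ore domain $\widetilde\CT$, obtaining $PQ=R\,F_{a,b}$ with $R\in\widetilde\CT$; clearing the denominator of $R$ by some nonzero $\delta(M)\in\CR[M]$ gives $\delta(M)\,PQ=(\delta R)\,F_{a,b}$ with $\delta R\in\CT$, whence $\delta(t^{2n})\,(PQ\,J)(n)=0$ and therefore $(PQ\,J)(n)=0$ for every $n$ since $\delta(t^{2n})\neq 0$ in $\CR$. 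Either route reduces the proposition to a finite, if laborious, computation that is in principle checkable directly from Lemma \ref{lemab}.
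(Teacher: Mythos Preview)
Your strategy is different from the paper's and, as written, leaves the essential step unperformed.  The paper does \emph{not} try to express $PQ$ as a left combination of $F_{a,b}$ and $\sigma(F_{a,b})$; instead it exploits the inhomogeneous recurrence of Lemma~\ref{lemab} directly.  It first computes $Q\,J_{T(a,b)}(n)$ explicitly (Lemma~\ref{Q}) and finds that $Q$ does not annihilate $J$ but collapses it to a completely elementary closed form,
\[
Q\,J_{T(a,b)}(n)=t^{4ab-2}\bigl(\lambda_{a+b}-\lambda_{a-b}\bigr)\,\frac{t^{2abn}\lambda_{(a-b)(n+1)}-t^{-2abn}\lambda_{(a-b)(n-1)}}{t^{2}-t^{-2}}=:c\cdot h(n),
\]
using only $\lambda_{k+l}+\lambda_{k-l}=\lambda_k\lambda_l$.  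It then checks by a second direct calculation (Lemma~\ref{P}) that $P\,h(n)=0$.  Thus the factorisation $PQ$ has an intrinsic meaning: $Q$ strips the colored Jones function down to the simple sequence $h$, and $P$ is designed to kill $h$.  This is exactly the ``cancellation in the product'' you anticipated, but made transparent.

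By contrast, your main route hinges on an identity $PQ=U\,F_{a,b}+V\,\sigma(F_{a,b})$ in $\CT$ that you neither write down nor verify; you only say you ``would'' compare thirteen $L$-coefficients.  That is where all the content lies, and there is no a priori reason the decomposition must exist in $\CT$ (the recurrence ideal $\CA_K$ need not equal the left ideal generated by $F_{a,b}$ and $\sigma(F_{a,b})$).  Your fallback via Ore division is sound in principle, but note that it is not automatic that the remainder vanishes: what you must actually do is compute the remainder of $PQ$ upon left division by $F_{a,b}$ in $\widetilde{\CT}$ and check it is zero, which is again the whole computation, only organised differently.  Your preliminary observation that $\sigma(F_{a,b})$ annihilates $J$ via the reflection $n\mapsto -n$ is correct and pleasant, and your evaluation $\varepsilon(P)=\varepsilon(Q)$ matches the paper's $\varepsilon(PQ)=L^{-6}M^{-4ab}A_{T(a,b)}^{4}$; but these remarks do not substitute for the missing calculation.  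If you want a complete proof along your lines you must either exhibit $U$ explicitly or carry out the division; the paper's two-step computation through $h(n)$ is both shorter and more explanatory.
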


\begin{proof} We first prove the following two lemmas.

\begin{lemma}
One has $$Q \, J_{T(a,b)}(n)=t^{4ab-2}(\lambda_{a+b}-\lambda_{a-b}) \, \frac{t^{2abn}\lambda_{(a-b)(n+1)}-t^{-2abn}\lambda_{(a-b)(n-1)}}{t^2-t^{-2}}.$$
\label{Q}
\end{lemma}

\begin{proof}
Let 
$$g(n) = t^{-2abn} \, \frac{t^2 \lambda_{(a+b)n}-t^{-2} \lambda_{(a-b)n}}{t^2-t^{-2}}.$$
Then, by Lemma \ref{lemab}, $J_{T(a,b)}(n+2) = t^{-4ab(n+1)} \, J_{T(a,b)}(n)+g(n+1)$. Hence $Q\,J_{T(a,b)}(n)$ is equal to
\begin{eqnarray*}
  && t^{-6ab} \Big( t^{4ab(n+3)} \, J_{T(a,b)}(n+3)+t^{-4ab(n-3)} \, J_{T(a,b)}(n-3) \Big)\\
       && - \, (t^{2(a+b)}+t^{-2(a+b)})t^{-4ab} \Big( t^{4ab(n+2)} \, J_{T(a,b)}(n+2)+t^{-4ab(n-2)} \, J_{T(a,b)}(n-2) \Big)\\
       && + \, t^{-2ab} \Big( t^{4ab(n+1)} \, J_{T(a,b)}(n+1)+t^{-4ab(n-1)} \, J_{T(a,b)}(n-1) \Big)\\
       && - \, (t^{2ab}+t^{-2ab}) \Big( J_{T(a,b)}(n+1)+J_{T(a,b)}(n-1) \Big)+2(t^{2(a+b)}+t^{-2(a+b)}) \, J_{T(a,b)}(n)\\
       &=& t^{-6ab} \Big( t^{4ab} \big( J_{T(a,b)}(n+1)+J_{T(a,b)}(n-1) \big)+t^{2ab(n+5)}g(n+2)-t^{-2ab(n-5)}g(n-2) \Big)\\
       && - \, (t^{2(a+b)}+t^{-2(a+b)})t^{-4ab} \Big( 2 t^{4ab} \, J_{T(a,b)}(n)+t^{2ab(n+4)}g(n+1)-t^{-2ab(n-4)}g(n-1) \Big)\\
       && + \,  t^{-2ab} \Big( t^{4ab} \left(J_{T(a,b)}(n-1)+J_{T(a,b)}(n+1)\right)+\left(t^{2ab(n+3)}-t^{-2ab(n-3)}\right)g(n) \Big)\\
       && - \, (t^{2ab}+t^{-2ab}) \Big( J_{T(a,b)}(n+1)+J_{T(a,b)}(n-1) \Big)+2(t^{2(a+b)}+t^{-2(a+b)}) \, J_{T(a,b)}(n)\\
       &=& t^{-6ab} \Big( t^{2ab(n+5)}g(n+2)-t^{-2ab(n-5)}g(n-2) \Big) \\
       && -\, \left(t^{2(a+b)}+t^{-2(a+b)}\right) t^{-4ab}\Big( t^{2ab(n+4)}g(n+1)-t^{-2ab(n-4)}g(n-1) \Big)\\
       && + \, t^{-2ab}\left(t^{2ab(n+3)}-t^{-2ab(n-3)}\right)g(n).
\end{eqnarray*}
Using the definition of $g(n)$, $Q \, J_{T(a,b)}(n)$ is equal to
\begin{eqnarray*}
  && t^{4ab} \Big( t^{2abn} \, \frac{t^{2}\lambda_{(a+b)(n+2)}-t^{-2}\lambda_{(a-b)(n+2)}}{t^2-t^{-2}}-t^{-2abn} \, \frac{t^{2} \lambda_{(a+b)(n-2)}-t^{-2} \lambda_{(a-b)(n-2)}}{t^2-t^{-2}} \Big)\\
 && -\, (t^{2(a+b)}+t^{-2(a+b)}) t^{4ab} \times \\
&& \Big( t^{2abn} \, \frac{t^2\lambda_{(a+b)(n+1)}-t^{-2}\lambda_{(a-b)(n+1)}}{t^2-t^{-2}} -t^{-2abn} \, \frac{t^2\lambda_{(a+b)(n-1)}-t^{-2} \lambda_{(a-b)(n-1)}}{t^2-t^{-2}} \Big)\\
 && + \, t^{4ab} (t^{2abn}-t^{-2abn})  \, \frac{t^2\lambda_{(a+b)n}-t^{-2}\lambda_{(a-b)n}}{t^2-t^{-2}}.
\end{eqnarray*}
Now applying the equality $\lambda_{k+l}+\lambda_{k-l}=\lambda_{k}\lambda_{l}$, we then obtain
$$Q \, J_{T(a,b)}(n)=t^{4ab-2}(\lambda_{a+b}-\lambda_{a-b}) \, \frac{t^{2abn}\lambda_{(a-b)(n+1)}-t^{-2abn}\lambda_{(a-b)(n-1)}}{t^2-t^{-2}}.$$
This proves Lemma \ref{Q}.
\end{proof}

Let $h(n)=t^{2abn}\lambda_{(a-b)(n+1)}-t^{-2abn}\lambda_{(a-b)(n-1)}$. 

\begin{lemma}
 The function $h(n)$ is annihilated by the operator $P$, i.e. $Ph(n)=0.$
 \label{P}
\end{lemma}
\begin{proof}
Let $c=a-b$. Then $Ph(n)$ is equal to
\begin{eqnarray*} 
&&t^{-10ab} \Big( t^{4ab(n+3)}h(n+3)+t^{-4ab(n-3)}h(n-3) \Big)\\
&& - \, (t^{2(a-b)}+t^{2(b-a)})t^{-4ab} \Big(t^{4ab(n+2)}h(n+2)+t^{-4ab(n-2)}h(n-2) \Big)\\
&&+\,t^{2ab} \Big( t^{4ab(n+1)}h(n+1)+t^{-4ab(n-1)}h(n-1) \Big)\\
&& - \, (t^{2ab}+t^{-2ab}) \Big( h(n+1)+h(n-1) \Big)+(t^{2(a-b)}+t^{2(b-a)})(t^{4ab}+t^{-4ab})h(n) \\
&=& \Big( t^{2ab(3n+4)}\lambda_{c(n+4)}-t^{2ab(n-2)}\lambda_{c(n+2)}+t^{-2ab(n+2)} \lambda_{c(n-2)}-t^{-2ab(3n-4)}\lambda_{c(n-4)}\Big)\\
&& - \, \lambda_{c} \Big(t^{2ab(3n+4)}\lambda_{c(n+3)}-t^{2abn}\lambda_{c(n+1)}+t^{-2abn}\lambda_{c(n-1)}-t^{-2ab(3n-4)}\lambda_{c(n-3)}\Big)\\
&& + \, \Big(t^{2ab(3n+4)}\lambda_{c(n+2)}-t^{2ab(n+2)}\lambda_{cn}+t^{-2ab(n-2)}\lambda_{cn}-t^{-2ab(3n-4)}\lambda_{c(n-2)}\Big)\\
                                && - \, (t^{2ab}+t^{-2ab})\Big(t^{2ab(n+1)}\lambda_{c(n+2)}-t^{-2ab(n+1)}\lambda_{cn} + t^{2ab(n-1)}\lambda_{cn}-t^{-2ab(n-1)}\lambda_{c(n-2)}\Big)\\
&& + \, \lambda_{c}(t^{4ab}+t^{-4ab}) \Big(t^{2abn}\lambda_{c(n+1)}-t^{-2abn}\lambda_{c(n-1)}\Big).
\end{eqnarray*}

Note that $\lambda_{k+l}+\lambda_{k-l}=\lambda_{k}\lambda_{l}$. Hence $Ph(n)$ is equal to
\begin{eqnarray*}
 &&  \Big(-t^{2ab(n-2)}\lambda_{c(n+2)}+t^{-2ab(n+2)}\lambda_{c(n-2)}\Big)-\lambda_{c}\Big(-t^{2abn}\lambda_{c(n+1)}+t^{-2abn}\lambda_{c(n-1)}\Big)\\
&& + \, \Big(-t^{2ab(n+2)}\lambda_{cn}+ t^{-2ab(n-2)}\lambda_{cn}\Big)\\
                                && - \, (t^{2ab}+t^{-2ab})\Big(t^{2ab(n+1)}\lambda_{c(n+2)}-t^{-2ab(n+1)}\lambda_{cn} +t^{2ab(n-1)}\lambda_{cn}-t^{-2ab(n-1)}\lambda_{c(n-2)}\Big)\\
&& + \, \lambda_{c}(t^{4ab}+t^{-4ab}) \Big(t^{2abn}\lambda_{c(n+1)}-t^{-2abn}\lambda_{c(n-1)}\Big)\\
                                &=& -(t^{4ab}+t^{-4ab}+1)t^{2abn}\lambda_{c(n+2)}+(t^{4ab}+t^{-4ab}+1)t^{-2abn}\lambda_{c(n-2)}\\
                                && - \, (t^{4ab}+t^{-4ab}+1) (t^{2abn}-t^{-2abn})\lambda_{cn}\\
                                && + \,\lambda_{c}(t^{4ab}+t^{-4ab}+1) \Big(t^{2abn}\lambda_{c(n+1)}-t^{-2abn}\lambda_{c(n-1)}\Big)\\
                                &=& -(t^{4ab}+t^{-4ab}+1)t^{2abn}\Big(\lambda_{c(n+2)}+\lambda_{cn}-\lambda_{c}\lambda_{c(n+1)}\Big)\\
                                && + \, (t^{4ab}+t^{-4ab}+1) t^{-2abn}\Big(\lambda_{c(n-2)}+\lambda_{cn}-\lambda_{c}\lambda_{c(n-1)}\Big)\\
                                &=& 0.
\end{eqnarray*}
This proves Lemma \ref{P}.
\end{proof}

Proposition \ref{PQ} follows directly from Lemmas \ref{Q} and \ref{P}.
\end{proof}

\subsection{The case $a=2$} We claim that

\begin{proposition}
The colored Jones function of $T(2,b)$ is annihilated by the operator
\begin{eqnarray*}
R &=& t^{-4b}(L^2M^{2b}+L^{-2}M^{-2b})+(t^{2b}+t^{-2b})(L+L^{-1})\\&&-(t^4+t^{-4})t^{-2b}(LM^{2b}+L^{-1}M^{-2b})+(M^{2b}+M^{-2b})-2(t^4+t^{-4}).
\end{eqnarray*}
\label{R}
\end{proposition}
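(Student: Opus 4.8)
The plan is to verify directly that $R$ kills $J:=J_{T(2,b)}$, in the same spirit as the computation of $Q\,J_{T(a,b)}$ in Lemma~\ref{Q}; the one simplification here is that $R$ annihilates $J$ outright, so no second factor (playing the role of $P$) is needed. Writing out the action of each monomial via $(L^jM^kf)(n)=t^{2k(n+j)}f(n+j)$ and simplifying the exponents, the first step gives
\begin{eqnarray*}
R\,J(n) &=& t^{4b(n+1)}J(n+2)+t^{-4b(n-1)}J(n-2)+(t^{2b}+t^{-2b})\big(J(n+1)+J(n-1)\big)\\
&& -\,(t^4+t^{-4})\big(t^{2b(2n+1)}J(n+1)+t^{-2b(2n-1)}J(n-1)\big)+\big(t^{4bn}+t^{-4bn}-2(t^4+t^{-4})\big)\,J(n).
\end{eqnarray*}

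Next I would eliminate the shifts $J(n\pm1),J(n\pm2)$ in favour of $J(n)$ using Lemma~\ref{lem2b}. Besides Lemma~\ref{lem2b} itself, applying it with $n$ replaced by $n+1$, $n-1$, $n-2$ (and solving for the smaller index where appropriate) yields
\begin{eqnarray*}
J(n+2)&=&-t^{-(4n+6)b}J(n+1)+t^{-2(n+1)b}[2n+3],\\
J(n-1)&=&-t^{(4n-2)b}J(n)+t^{2nb}[2n-1],\\
J(n-2)&=&-t^{(4n-6)b}J(n-1)+t^{(2n-2)b}[2n-3].
\end{eqnarray*}
Substituting these (reducing $J(n\pm2)$ to $J(n\pm1)$ and then $J(n\pm1)$ to $J(n)$) rewrites $R\,J(n)$ as $c(n)\,J(n)$ plus a combination of the quantum integers $[2n\pm1]$ and $[2n\pm3]$. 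The coefficient $c(n)$ collapses: the terms $\pm t^{\pm4nb}$ produced by the two reductions cancel against $t^{4bn}+t^{-4bn}$, and the two copies of $t^4+t^{-4}$ generated by the reductions cancel $-2(t^4+t^{-4})$, so $c(n)=0$.

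It then remains to check that the inhomogeneous part vanishes. Setting $X=t^{2b(n+1)}$ and $Y=t^{-2b(n-1)}$, this part equals
$$X\big([2n+3]-(t^4+t^{-4})[2n+1]+[2n-1]\big)+Y\big([2n-3]+[2n+1]-(t^4+t^{-4})[2n-1]\big),$$
and both brackets are zero by the three-term identity $[m+2]+[m-2]=(t^4+t^{-4})[m]$, applied at $m=2n+1$ and $m=2n-1$; this identity plays here the role that $\lambda_{k+l}+\lambda_{k-l}=\lambda_k\lambda_l$ plays in the case $a,b>2$. Hence $R\,J_{T(2,b)}(n)=0$, proving the proposition. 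The only genuine work is bookkeeping: carrying the four exponential prefactors correctly through the substitutions and grouping the remainder so that the single identity above finishes it; the cancellations of $c(n)$ and of the two brackets are otherwise routine, and, since Lemma~\ref{lem2b} is an identity of Laurent polynomials valid for all $n\in\BZ$, the conclusion holds for every $n$.
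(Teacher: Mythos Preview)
Your proof is correct and follows the same approach as the paper's own proof: expand $R\,J(n)$ via the actions of $L,M$, eliminate the shifted values $J(n\pm1),J(n\pm2)$ using Lemma~\ref{lem2b}, observe that the coefficient of $J(n)$ cancels, and kill the remaining inhomogeneous part with the identity $[m+2]+[m-2]=(t^4+t^{-4})[m]$ (the special case $l=2$ of the paper's $[k+l]+[k-l]=(t^{2l}+t^{-2l})[k]$). The only differences are cosmetic---you simplify the exponential prefactors up front and perform the reduction $J(n\pm2)\to J(n\pm1)\to J(n)$ in two stages rather than one---so the bookkeeping is organized slightly differently but the argument is the same.
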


\begin{proof}
From Lemma \ref{lem2b} we have
\begin{eqnarray*}
J_{T(2,b)}(n+1) &=& -t^{-(4n+2)b}J_{T(2,b)}(n)+t^{-2nb}[2n+1],\\
J_{T(2,b)}(n+2) &=& t^{-8(n+1)b} J_{T(2,b)}(n)-t^{-6(n+1)b}[2n+1]+t^{-2(n+1)b}[2n+3],\\
J_{T(2,b)}(n-1) &=& -t^{(4n-2)b}J_{T(2,b)}(n)+t^{2nb}[2n-1],\\
J_{T(2,b)}(n-2) &=& t^{8(n-1)b} J_{T(2,b)}(n)-t^{6(n-1)b}[2n-1]+t^{2(n-1)b}[2n-3].
\end{eqnarray*}
Hence $R \, J_{T(2,b)}(n)$ is equal to
\begin{eqnarray*}
 && t^{-4b} \Big( t^{4(n+2)b} \, J_{T(2,b)}(n+2)+t^{-4(n-2)b} \, J_{T(2,b)}(n-2) \Big)\\
 && + \, (t^{2b}+t^{-2b})\Big( J_{T(2,b)}(n+1)+J_{T(2,b)}(n-1) \Big)\\&&-(t^4+t^{-4})t^{-2b} \Big( t^{4(n+1)b} \, J_{T(2,b)}(n+1)+t^{-4(n-1)b} \, J_{T(2,b)}(n-1) \Big)\\
&&+\, \Big( (t^{4nb}+t^{-4nb})-2(t+t^{-4}) \Big)J_{T(2,b)}(n)\\
&=& t^{-4b} \Big( -t^{-2(n-1)b}[2n+1]+t^{2(n+3)b}[2n+3]-t^{2(n+1)b}[2n-1]+t^{-2(n-3)b}[2n-3] \Big)\\
&&+(t^{2b}+t^{-2b}) \Big( t^{-2nb}[2n+1]+t^{2nb}[2n-1] \Big)\\
&&-(t^4+t^{-4})t^{-2b} \Big( t^{(2n+4)b}[2n+1]+t^{(-2n+4)b}[2n-1] \Big)\\
&&-(t+t^{-4})t^{2b}\Big(t^{2nb}[2n+1]+t^{-2nb}[2n-1]\Big)\\
&=&t^{2b}t^{2nb} \Big( [2n+3]+[2n-1]-(t^4+t^{-4})[2n+1] \Big)\\
&&+\, t^{2b}t^{-2nb} \Big( [2n-3]+[2n+1]-(t^4+t^{-4})[2n-1] \Big)\\
&=& 0,
\end{eqnarray*}
since $[k+l]+[k-l]=(t^{2l}+t^{-2l})[k].$ 
\end{proof}

\subsection{Proof of Theorem \ref{th2}} We first note that the $A$-ideal $\fp$, the kernel of $\theta: \ft^\sigma \longrightarrow
\BC[\chi(X)]$, is radical i.e. $\sqrt{\fp}=\fp$, since the character ring $\BC[\chi(X)]$ is reduced, i.e. has nil-radical 0, by definition.

\begin{lemma}
Suppose $\delta(t,M,L) \in \CA_K$. Then there are polynomials $g(t,M) \in \BC[t^{\pm 1},M]$ and $\gamma(t,M,L) \in \CT$ such that
\begin{equation}
\delta(t,M,L)=\frac{1}{g(t,M)}\,\gamma(t,M,L)\,\alpha_K(t,M,L).
\label{delta}
\end{equation}
Moreover, $g(t,M)$ and $\gamma(t,M,L)$ can be chosen so that $\varepsilon (g) \not= 0$. 
\label{00}
\end{lemma}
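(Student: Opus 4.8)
The plan is to exploit the principal-ideal structure of $\widetilde\CT$ to produce the factorization, and then to run the skew Euclidean division explicitly so as to pin down exactly which denominators can occur; the behaviour under $\varepsilon$ can then be read off from a single leading coefficient.

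First I would pass to $\widetilde\CT$. Since $\delta\in\CA_K\subseteq\widetilde\CA_K$ and $\widetilde\CT$ is a left principal-ideal domain with $\widetilde\CA_K=\widetilde\CT\,\alpha_K$, there is a unique $\beta\in\widetilde\CT$ with $\delta=\beta\,\alpha_K$. Write $\beta=\sum_j f_j(M)L^j$ with $f_j\in\CR(M)$, and let $g(t,M)\in\BC[t^{\pm1},M]$ be a common denominator of the finitely many $f_j$. Because $g$ has $L$-degree $0$, the product $\gamma:=g\beta$ equals $\sum_j g f_j(M)\,L^j$ with each $gf_j\in\BC[t^{\pm1},M]$, so $\gamma\in\CT$ and $\delta=\frac1g\,\gamma\,\alpha_K$. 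This settles existence.

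For the ``moreover'' part I would not take an arbitrary common denominator, but instead compute $\beta$ by dividing $\delta$ on the right by $\alpha_K$ in $\widetilde\CT$. Writing $\alpha_K=\sum_{j=0}^d\alpha_{K,j}(t,M)L^j$ with top coefficient $a:=\alpha_{K,d}$, each step of the division matches leading $L$-terms: if the current remainder has top term $\rho\,L^{e}$, then using $q\,L^{e-d}\cdot a\,L^{d}=q\,a(t^{2(e-d)}M)\,L^{e}$ the quotient term is $\big(\rho/a(t^{2(e-d)}M)\big)L^{e-d}$. Hence every coefficient of $\beta$ carries only shifts $a(t^{2s}M)$ of $a$ in its denominator, and one may take $g$ to be a product of such shifts $a(t^{2s}M)$.

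The key point is then that the shift $M\mapsto t^{2s}M$ is invisible to $\varepsilon$: since $(-1)^{2s}=1$, one has $\varepsilon\big(a(t^{2s}M)\big)=\varepsilon(a)$ for every $s$, so $\varepsilon(g)=\prod_s\varepsilon(a)=\varepsilon(a)^{N}$. Thus the whole statement reduces to the single non-vanishing $\varepsilon(\alpha_{K,d})\neq0$, i.e.\ that $t+1$ does not divide the leading $L$-coefficient of the recurrence polynomial. This is the step I expect to be the main obstacle, since coprimality of the $\alpha_{K,j}$ by itself does not forbid $t+1\mid\alpha_{K,d}$. For the torus knots at hand, however, it is immediate from the explicit recurrence polynomials computed above: for $a,b>2$ the leading coefficient of $\alpha_{T(a,b)}=F_{a,b}$ is $c_3$, with $\varepsilon(c_3)=(M^{a+b}+M^{-(a+b)})-(M^{a-b}+M^{-(a-b)})\neq0$, and for $a=2$ the leading coefficient of $\alpha_{T(2,b)}=G_{2,b}$ is $d_2$, with $\varepsilon(d_2)=M^2-M^{-2}\neq0$. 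Choosing $g$ as above with this $a$ then yields $\varepsilon(g)=\varepsilon(a)^N\neq0$, which completes the argument.
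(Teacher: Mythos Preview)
Your existence argument is the same as the paper's. For the ``moreover'' part, however, the paper takes a shorter and fully general route: having written $g\delta=\gamma\alpha_K$, one first cancels any common factor $t+1$ from $g$ and $\gamma$; if after this $\varepsilon(g)=0$, then $t+1\mid g$ while $t+1\nmid\gamma$, and since $t$ is central the quotient $\varepsilon(\CT)=\ft=\BC[M^{\pm1},L^{\pm1}]$ is an integral domain, so $t+1\mid\gamma\alpha_K$ together with $\varepsilon(\gamma)\neq0$ forces $t+1\mid\alpha_K$, contradicting the coprimality of the $\alpha_{K,j}$. Your approach instead tracks denominators through the skew Euclidean division and reduces everything to the single non-vanishing $\varepsilon(\alpha_{K,d})\neq0$, which you then verify by hand from the explicit operators $F_{a,b}$ and $G_{2,b}$. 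This is correct and pleasantly concrete, but it establishes the lemma only for knots where that leading-coefficient condition happens to hold (you rightly flag this as the obstacle), whereas the paper's cancellation argument proves the statement for every knot with no case analysis.
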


\begin{proof}
By definition $\alpha_K$ is a generator of $\widetilde{\CA}_K$, the extension of $\CA_K$ in the principal left-ideal domain $\widetilde{\CT}$. Since $\delta \in \CA_K$, it is divisible by $\alpha_K$ in $\widetilde{\CT}$. Hence \eqref{delta} follows.

We can assume that $t+1$ does not divide both $g(t,M)$ and $\gamma(t,M,L)$ simultaneously. If $\varepsilon(g)=0$ then $g$ is divisible by $t+1$, and hence $\gamma$ is not. But then from the equality $g\delta=\gamma\alpha_K$, it follows that $\alpha_K$ is divisible by $t+1$, which is impossible, since all the coefficients of powers of $L$ in $\alpha_K$ are supposed to be co-prime.
\end{proof}

\underline{\em Showing $\, \sqrt{\varepsilon(\CA_K^{\sigma})} \subset \fp$}. For torus knots, by Section 1, we have $\varepsilon(\alpha_K)=f(M)A_K$, where $f(M) \in \BC[M^{\pm 1}]$. For every $\delta \in \CA_K$, by Lemma \ref{00}, there exist $g(t,M) \in \BC[t^{\pm 1},M]$ and $\gamma \in \CT$ such that $\delta=\frac{1}{g(t,M)}\,\gamma\,\alpha_K$ and $\varepsilon (g) \not= 0$. It implies that
\begin{equation}
\varepsilon(\gamma) = \frac{1}{\varepsilon (g(M))}\, \varepsilon(\gamma)\, \varepsilon(\alpha_K)=\frac{1}{\varepsilon (g(M))}\, \varepsilon(\gamma) \, f(M)A_K.
\label{Mfactor}
\end{equation}

The $A$-polynomial of a torus knot does not contain any non-trivial factor depending on $M$ only. Since $\varepsilon(\gamma) \in \ft=\BC[L^{\pm 1},M^{\pm 1}]$, equation \eqref{Mfactor} implies that $h:=\frac{1}{\varepsilon (g(M))}\, \varepsilon(\gamma) \, f(M)$ is an element of $\ft$. Hence $\varepsilon(\gamma) \in A_K \cdot \ft$, the ideal of $\ft$ generated by $A_K$. It follows that $\varepsilon(\CA_K) \subset A_K \cdot \ft$ and thus  $\varepsilon(\CA_K^{\sigma}) \subset (A_K \cdot \ft)^{\sigma}=\fp$. Hence $\sqrt{\varepsilon(\CA_K^{\sigma})} \subset \sqrt{\fp}=\fp$. 

\underline{\em Showing $\, \fp \subset \sqrt{\varepsilon(\CA_K^{\sigma})}$}. For $a,b>2$, by Proposition \ref{PQ} the colored Jones function of $T(a,b)$ is annihilated by the operator $PQ.$ Note that \begin{eqnarray*}
\varepsilon(PQ) &=& (L+L^{-1}-2)^2(L^2M^{2ab}+L^{-2}M^{-2ab}-2)^2\\
&=& L^{-2} \big( L^{-1}M^{-ab}(L-1)(L^2M^{2ab}-1) \big)^4.
\end{eqnarray*}
If $u \in \fp$ then $u=vA'_{T(a,b)}$, where $A'_{T(a,b)}:=L^{-1}M^{-ab}(L-1)(L^2M^{2ab}-1)=L^{-1}M^{-ab}A_{T(a,b)}$ and $v \in \BC[M^{\pm 1}, L^{\pm 1}]$. It is easy to see that $\sigma(v)=Lv$, since $\sigma(u)=u$ and $\sigma(A'_{T(a,b)})=L^{-1}A'_{T(a,b)}.$ This implies that $\sigma(v^2L)=\sigma(v)^2L^{-1}=v^2L$. We then have $$u^4=v^4A_{T(a,b)}^{'4}= \varepsilon(v^4L^2PQ) \in \varepsilon(\CA_K^{\sigma}),$$ hence $u \in  \sqrt{\varepsilon(\CA_K^{\sigma})}.$

For $a=2$, by Proposition \ref{R} the colored Jones function of $T(2,b)$ is annihilated by the operator $R.$ Note that $\sigma(R)=R$ and 
\begin{eqnarray*}
\varepsilon(R) &=& (L+L^{-1}-2)(LM^{2b}+L^{-1}M^{-2b}+2)\\
&=& \big( L^{-1}M^{-b}(L-1)(LM^{2b}+1) \big)^2.
\end{eqnarray*}
If $u \in \fp$ then $u=vA'_{T(2,b)}$, where $A'_{T(2,b)}:=L^{-1}M^{-b}(L-1)(LM^{2b}+1)=L^{-1}M^{-b}A_{T(2,b)}$ and $v \in \BC[M^{\pm 1}, L^{\pm 1}]$. It is easy to see that $\sigma(v)=-v$ and hence $\sigma(v^2)=\sigma(v)\sigma(v)=v^2$. We then have $$u^2=v^2A_{T(2,b)}^{'2}= \varepsilon(v^2R) \in \varepsilon(\CA_K^{\sigma}),$$ hence $u \in  \sqrt{\varepsilon(\CA_K^{\sigma})}.$

In both cases $\fp \subset \sqrt{\varepsilon(\CA_K^{\sigma})}.$ Hence $\sqrt{\varepsilon(\CA_K^{\sigma})}=\fp$ for all torus knots.

\end{document}